\theoremstyle{plain}
\newtheorem{thm}{Theorem}[section]
\newtheorem*{thm*}{Theorem}
\newtheorem*{cor*}{Corollary}
\newtheorem*{defn*}{Definition}
\newtheorem{prop}[thm]{Proposition}
\newtheorem{lem}[thm]{Lemma}
\newtheorem{cor}[thm]{Corollary}
\newtheorem{claim}[thm]{Claim}
\newtheorem*{claim*}{Claim}
\newtheorem*{ac}{Acknowledgments}
\theoremstyle{definition}
\newtheorem{defn}[thm]{Definition}
\newtheorem{ex}[thm]{Example}
\newtheorem{rem}[thm]{Remark}
\newtheorem{fact}[thm]{Fact}
\theoremstyle{remark}
\numberwithin{equation}{thm}
\def\Coker{\mathrm{Coker}}
\def\Ker{\mathrm{Ker}}
\def\Im{\mathrm{Im}}
\def\a{\mathfrak a}
\def\m{\mathfrak m}
\def\p{\mathfrak p}
\def\q{\mathfrak q}
\def\M{\mathfrak M}
\def\H{\mathrm{H}}
\newcommand{\rma}{\mathrm{a}}
\newcommand{\calC}{\mathcal{C}}
\newcommand{\calD}{\mathcal{D}}
\newcommand{\calE}{\mathcal{E}}
\newcommand{\calF}{\mathcal{F}}
\newcommand{\calG}{\mathcal{G}}
\newcommand{\calI}{\mathcal{I}}
\newcommand{\calM}{\mathcal{M}}
\newcommand{\calR}{\mathcal{R}}
\newcommand{\calS}{\mathcal{S}}
\def\depth{\mathrm{depth}}
\def\Supp{\mathrm{Supp}}
\def\Ass{\mathrm{Ass}}
\def\Assh{\mathrm{Assh}}
\def\Min{\mathrm{Min}}
\def\Spec{\mathrm{Spec}}
\begin{document}

\setlength{\baselineskip}{12pt}
%%%%%%%%%%%%%%%%%%%%%%%%%%%%%%%%%%%%%%%%%%%%%%%%%%%%%%%%%%%%%
\title{Sequentially Cohen-Macaulay Rees algebras}
\author{Naoki Taniguchi}
\address{Department of Mathematics, School of Science and Technology, Meiji University, 1-1-1 Higashi-mita, Tama-ku, Kawasaki 214-8571, Japan}
\email{taniguti@math.meiji.ac.jp}
\urladdr{http://www.isc.meiji.ac.jp/~taniguci/}

\author{Tran Thi Phuong}
\address{Department of Mathematics, School of Science and Technology, Meiji University, 1-1-1 Higashi-mita, Tama-ku, Kawasaki 214-8571, Japan}
\email{sugarphuong@gmail.com}

\author{Nguyen Thi Dung}
\address{Thai Nguyen University of Agriculture and Forestry, Thai Nguyen, Vietnam}
\email{xsdung050764@gmail.com}

\author{Tran Nguyen An}
\address{Thai Nguyen University of Pedagogical, Thai Nguyen, Vietnam}
\email{antrannguyen@gmail.com}

\thanks{2010 {\em Mathematics Subject Classification.}13A30, 13D45, 13E05, 13H10}
\thanks{{\em Key words and phrases:} Dimension filtration, Sequentially Cohen-Macaulay module, Rees module}

\thanks{The first author was partially supported by Grant-in-Aid for JSPS Fellows 26-126 and by JSPS Research Fellow. The second author was partially supported by JSPS KAKENHI 26400054. The third and the fourth author were partially supported by a Grant of Vietnam Institute for Advanced Study in Mathematics (VIASM) and Vietnam National Foundation for Science and Technology Development (NAFOSTED). }

\begin{abstract}
This paper studies the question of when the Rees algebras associated to arbitrary filtration of ideals are sequentially Cohen-Macaulay. Although this problem has been already investigated by \cite{CGT}, their situation is quite a bit of restricted, so we are eager to try the generalization of their results. 
\end{abstract}

\maketitle
\tableofcontents
\section{Introduction}

The notion of sequentially Cohen-Macaulay property was originally introduced by R. P. Stanley (\cite{St}) for Stanley-Reisner algebras and then it has been furiously explored by many researchers, say D. T. Cuong, N. T. Cuong, S. Goto, P. Schenzel and others (see \cite{CC, CGT, GHS, Sch}), from the view point of not only combinatorics, but also commutative algebra. The purpose of this paper is to investigate the question of when the Rees algebras are sequentially Cohen-Macaulay, which has a previous research by \cite{CGT}. In \cite{CGT} they gave a characterization of the sequentially Cohen-Macaulay Rees algebras of $\m$-primary ideals (\cite[Theorem 5.2, Theorem 5.3]{CGT}). However their situation is not entirely satisfactory, so we are eager to analyze the case where the ideal is not necessarily $\m$-primary. More generally we want to deal with the sequentially Cohen-Macaulayness of the Rees modules since the sequentially Cohen-Macaulay property is defined for any finite modules over a Noetherian ring. Thus the main problem of this paper is when the Rees modules associated to arbitrary filtration of modules are sequentially Cohen-Macaulay.

Let $R$ be a commutative Noetherian ring, $M \neq (0)$ a finitely generated $R$-module with $d = \dim_R M < \infty$. 
Then we consider a filtration
$$
\calD : D_0 :=(0) \subsetneq D_1 \subsetneq  D_2 \subsetneq  \ldots \subsetneq D_{\ell} = M
$$
of $R$-submodules of $M$, which we call {\it the dimension filtration of $M$}, if $D_{i-1}$ is the largest $R$-submodule of $D_i$ with $\dim_R D_{i-1}< \dim_R D_i$ for $1\le i \le \ell$, here $\dim_R(0) = -\infty$ for convention. We note here that our notion of dimension filtration is based on \cite{GHS} and slightly different from that of the original one given by P. Schenzel (\cite{Sch}), however let us adopt the above definition throughout this paper. Then we say that $M$ is  {\it a sequentially Cohen-Macaulay $R$-module}, if the quotient module $C_i = D_i/D_{i-1}$ of $D_i$ is a Cohen-Macaulay $R$-module for every $1 \le i \le \ell$. In particular, a Noetherian ring  $R$ is called {\it a sequentially Cohen-Macaulay ring}, if $\dim R < \infty$ and $R$ is a sequentially Cohen-Macaulay module over itself. 

Let us now state our results, explaining how this paper is organized. In Section 2 we sum up the notions of the sequentially Cohen-Macaulay properties and filtrations of ideals and modules.  
In Section 3 we shall give the proofs of the main results of this paper, which are stated as follows.

Suppose that $R$ is a local ring with maximal ideal $\m$. Let $\calF = \{F_n\}_{n\in \mathbb{Z}}$ be a filtration of ideals of $R$ such that $F_1\neq R$, $\mathcal M = \{M_n\}_{n\in \mathbb{Z}}$ a $\calF$-filtration of $R$-submodules of $M$. Then we put 
$$
\mathcal R =\sum_{n\geq0} F_nt^n  \subseteq R[t], \ \ \mathcal R^{\prime}  =\sum_{n\in\mathbb Z} F_nt^n  \subseteq R[t,t^{-1}], \ \
\mathcal G =\mathcal R^{\prime}/t^{-1}\mathcal R^{\prime} 
$$
and call them {\it the Rees algebra, the extended Rees algebra and the associated graded ring of $\calF$}, respectively. Similarly we set
$$
\mathcal R ( \calM) =\sum_{n\geq0} t^n \otimes M_n \subseteq  R[t]\otimes_R M, \ \
\mathcal R^{\prime} (\calM) = \sum_{n\in\mathbb Z}  t^n \otimes M_n \subseteq R[t,t^{-1}]\otimes_R M
$$
and
$$
\mathcal G(\calM) =\mathcal R^{\prime}(\calM)/t^{-1}\mathcal R^{\prime}(\calM) 
$$
which we call {\it the Rees module, the extended Rees module and the associated graded module of $\calM$}, respectively. Here $t$ stands for an indeterminate over $R$. 
We also assume that $\calR $ is a Noetherian ring and $\calR(\calM)$ is a finitely generated $\calR$-module. Set 
$$
\calD_i = \{M_n\cap D_i\}_{n\in \mathbb Z}, \ \ 
\calC_i = \{[(M_n\cap D_i)+D_{i-1}]/D_{i-1}\}_{n\in \mathbb Z}.
$$
for every $1 \le i \le \ell$. Then $\mathcal D_i$ (resp. $\mathcal C_i$) is a $\calF$-filtration of $R$-submodules of $D_i$ (resp. $C_i$).
With this notation the main results of this paper are the following, which are the natural generalization of the results \cite[Theorem 5.2, Theorem 5.3]{CGT}.

\begin{thm}\label{1.2} The following conditions are equivalent.
\begin{enumerate}[{\rm (1)}]
\item $\mathcal R^{\prime} (\calM)$ is a sequentially Cohen-Macaulay $\mathcal R^{\prime} $-module. 
\item $\mathcal G(\calM)$ is a sequentially Cohen-Macaulay $\calG$-module and $\{\mathcal G (\calD_i)\}_{0 \le i \le \ell}$ is the dimension filtration of $\calG(\calM)$.
\end{enumerate}
When this is the case, $M$ is a sequentially Cohen-Macaulay $R$-module.
\end{thm}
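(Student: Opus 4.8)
The plan is to exploit the homogeneous nonzerodivisor $u=t^{-1}$ of degree $-1$ on the extended Rees objects, together with the two specializations $\mathcal R'(\calN)/u\mathcal R'(\calN)\cong\mathcal G(\calN)$ and $\mathcal R'(\calN)/(1-u)\mathcal R'(\calN)\cong N$, valid for the Rees module of any $\calF$-filtration $\calN$ of any $R$-module $N$. For the first reduction $u$ is always $\mathcal R'(\calN)$-regular, which lets sequentially Cohen--Macaulay questions pass between $\mathcal R'(\calM)$ and $\mathcal G(\calM)$; for the second, $1-u$ is regular because every element of $\mathcal R'(\calN)$ is a Laurent polynomial with a top-degree component, and this recovers $M$ and yields the supplementary clause.

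First I would establish the structural fact that $\{\mathcal R'(\calD_i)\}_{0\le i\le\ell}$ is the dimension filtration of $\mathcal R'(\calM)$, with $\dim_{\mathcal R'}\mathcal R'(\calD_i)=\dim_R D_i+1$ and $\mathcal R'(\calD_i)/\mathcal R'(\calD_{i-1})\cong\mathcal R'(\calC_i)$. The strict increase of dimensions is immediate from the corresponding property of $\calD$; the content is the maximality. Since the dimension filtration of a $\mathbb Z$-graded module over a $\ast$-local ring consists of graded submodules, it suffices to take a graded submodule $N'\subseteq\mathcal R'(\calD_i)$ with $\dim N'<\dim D_i+1$ and show $N'\subseteq\mathcal R'(\calD_{i-1})$. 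The action of $u$ forces the homogeneous components of $N'$ to form an ascending chain $\cdots\subseteq N'_{n+1}\subseteq N'_n\subseteq\cdots$ of $R$-submodules of $D_i$ stabilizing in low degrees to some $D'\subseteq D_i$; a dimension count gives $\dim N'=\dim_R D'+1$, so $\dim D'<\dim D_i$, whence $D'\subseteq D_{i-1}$ by maximality of $D_{i-1}$ in $D_i$ and therefore $N'\subseteq\mathcal R'(\calD_{i-1})$. I expect this coefficient-module computation to be the main obstacle, since it is precisely where the Rees construction is shown to intertwine with the dimension filtration.

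Granting this, the equivalence is short. As each $\mathcal R'(\calC_i)$ is itself a Rees module, $u$ is regular on it, so the sequences $0\to\mathcal R'(\calD_{i-1})\to\mathcal R'(\calD_i)\to\mathcal R'(\calC_i)\to0$ stay exact modulo $u$, giving $\mathcal G(\calD_i)/\mathcal G(\calD_{i-1})\cong\mathcal G(\calC_i)$ and, by the depth--dimension count for a regular element, the equivalence that $\mathcal R'(\calC_i)$ is Cohen--Macaulay $\iff$ $\mathcal G(\calC_i)$ is Cohen--Macaulay. For (1)$\Rightarrow$(2): sequential Cohen--Macaulayness of $\mathcal R'(\calM)$ means every $\mathcal R'(\calC_i)$ is Cohen--Macaulay, hence every $\mathcal G(\calC_i)$ is; since a Cohen--Macaulay module is unmixed, a filtration whose quotients are Cohen--Macaulay of strictly increasing dimension is automatically the dimension filtration, so $\{\mathcal G(\calD_i)\}$ is the dimension filtration of $\mathcal G(\calM)$ and the latter is sequentially Cohen--Macaulay. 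For (2)$\Rightarrow$(1): the hypothesis pins down that the specific quotients $\mathcal G(\calD_i)/\mathcal G(\calD_{i-1})\cong\mathcal G(\calC_i)$ are Cohen--Macaulay, so each $\mathcal R'(\calC_i)$ is Cohen--Macaulay by lifting along the regular element $u$, and then $\mathcal R'(\calM)$ is sequentially Cohen--Macaulay by the structural fact.

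Finally, for the supplementary clause I would specialize along $1-u$. In either case each $\mathcal R'(\calC_i)$ is Cohen--Macaulay, and being Cohen--Macaulay at the $\ast$-maximal ideal it is Cohen--Macaulay at every prime; localizing at $\mathfrak P=\m\mathcal R'+(1-u)\mathcal R'$, where $1-u$ is a regular element lying in the maximal ideal, and using $\mathcal R'(\calC_i)/(1-u)\mathcal R'(\calC_i)\cong C_i$, the depth--dimension count gives that $C_i$ is Cohen--Macaulay. Since $\{D_i\}$ is by definition the dimension filtration of $M$, this shows $M$ is sequentially Cohen--Macaulay. The one point needing care here is that $1-u$ is not homogeneous, so the descent of Cohen--Macaulayness must be carried out at $\mathfrak P$ rather than at the $\ast$-maximal ideal.
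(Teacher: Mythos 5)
Your proof is correct, and its overall skeleton matches the paper's: identify the dimension filtration of $\calR'(\calM)$ as $\{\calR'(\calD_i)\}_{0\le i\le\ell}$, pass between $\calR'(\calC_i)$ and $\calG(\calC_i)$ along the regular element $u$ lying in the graded maximal ideal, and recover $M$ from the Cohen--Macaulayness of the $\calR'(\calC_i)$. You differ from the paper in two ingredients, both legitimately. First, for the structural fact you prove maximality of $\calR'(\calD_{i-1})$ in $\calR'(\calD_i)$ by hand, via the $u$-forced ascending chain of coefficient modules stabilizing to some $D'\subseteq D_i$ with $\dim N'=\dim_R D'+1$; the paper's Lemma \ref{5.9} instead computes $\Ass_{\calR'}\calR'(\calC_i)$ via Proposition \ref{2.4} and quotes the unmixedness criterion \cite[Theorem 2.3]{GHS}. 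Your argument is more self-contained (it does not need the unmixedness of $\Ass_R C_i$ from \cite{Sch}), but it silently uses that the dimension filtration of a graded module consists of graded submodules, and since you invoke the \cite{GHS}-type criterion anyway in the direction $(1)\Rightarrow(2)$ for $\calG(\calM)$, the paper's uniform use of that criterion is the more economical bookkeeping; note also your dimension count $\dim N'\le \dim_R D'+1$ is exactly Proposition \ref{2.4}(3) applied to the filtration $\{M_n\cap D'\}_{n\in\Bbb Z}$ of $D'$, so you are reproving a piece of it. Second, for the supplementary clause you specialize along the inhomogeneous regular element $1-u$ at the maximal ideal $\fkP=\m\calR'+(1-u)\calR'$, using $\calR'(\calC_i)/(1-u)\calR'(\calC_i)\cong C_i$; the paper instead uses the exact sequence $0\to\calR'(\calC_i)\to R[t,t^{-1}]\otimes_R C_i\to X\to 0$ with $X_u=(0)$, so that inverting $u$ shows $C_i[t,t^{-1}]$ is Cohen--Macaulay, and descends along the flat map $R\to R[t,t^{-1}]$. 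Both are standard mechanisms; the paper's stays entirely within graded localizations, while yours needs (and you correctly flag) the fact that a graded module Cohen--Macaulay at the graded maximal ideal is Cohen--Macaulay at every prime, so that the depth--dimension count at $\fkP$ is justified, together with the identification $\calR'/(1-u)\calR'\cong R$, which uses $F_n=R$ for $n\le 0$.
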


Let $\M$ be a unique graded maximal ideal of $\calR$. We set 
$$
\rma(N) = \max \{n\in \Bbb Z \mid [\H^t_\M(N)]_n \neq (0) \}
$$
for a finitely generated graded $\calR$-module $N$ of dimension $t$, and call it {\it the a-invariant of $N$} (see \cite[DEFINITION (3.1.4)]{GW}). Here $\{[\H^t_\M(N)]_n\}_{n \in \Bbb Z}$ stands for the homogeneous components of the $t$-th graded local cohomology module $\H^t_\M(N)$ of $N$ with respect to $\M$.  

\begin{thm}\label{1.3} Suppose that $M$ is a sequentially Cohen-Macaulay $R$-module and $F_1 \nsubseteq \p$ for every $\p \in \Ass_R M$. Then the following conditions are equivalent.
\begin{enumerate}[{\rm (1)}]
\item $\calR (\calM)$ is a sequentially Cohen-Macaulay $\calR$-module. 
\item $\calG(\calM)$ is a sequentially Cohen-Macaulay $\calG$-module, $\{\calG (\calD_i)\}_{0 \le i \le \ell}$ is the dimension filtration of $\calG(\calM)$  and  $\rma(\calG (\calC_i))<0$ for every $1 \le i \le \ell$.
\end{enumerate}
When this is the case, $\calR'(\calM)$ is a sequentially Cohen-Macaulay $\calR^{\prime}$-module.
\end{thm}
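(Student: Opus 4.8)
The plan is to reduce the sequential Cohen--Macaulayness of $\calR(\calM)$ and of $\calR'(\calM)$ to the Cohen--Macaulayness of the graded pieces attached to the dimension filtration of $M$, and then to compare the ordinary and the extended Rees module one piece at a time, the comparison being governed precisely by the $\rma$-invariant. Throughout I use that, since $M$ is sequentially Cohen--Macaulay, each $C_i = D_i/D_{i-1}$ is a Cohen--Macaulay $R$-module, and that the standard identity $\Ass_R C_i = \{\p \in \Ass_R M \mid \dim_R R/\p = \dim_R C_i\}$ together with the hypothesis $F_1 \nsubseteq \p$ for all $\p \in \Ass_R M$ shows that $F_1$ contains a $C_i$-regular element for every $i$.

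First I would record the isomorphisms
$$
\calR(\calD_i)/\calR(\calD_{i-1}) \cong \calR(\calC_i), \qquad \calR'(\calD_i)/\calR'(\calD_{i-1}) \cong \calR'(\calC_i),
$$
which follow from the second isomorphism theorem applied degreewise, using $M_n \cap D_{i-1} = (M_n\cap D_i)\cap D_{i-1}$. Because $F_1$ contains a $C_i$-regular element, the Rees and extended Rees modules of $\calC_i$ have dimension $\dim_R C_i + 1$, so these dimensions are strictly increasing in $i$; since $C_i$ is Cohen--Macaulay, hence unmixed, I would then argue that $\calR(\calC_i)$ and $\calR'(\calC_i)$ are unmixed of dimension $\dim_R C_i + 1$, which identifies $\{\calR(\calD_i)\}_{0\le i\le \ell}$ and $\{\calR'(\calD_i)\}_{0\le i\le \ell}$ as the dimension filtrations of $\calR(\calM)$ and $\calR'(\calM)$, respectively. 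Consequently $\calR(\calM)$ (resp. $\calR'(\calM)$) is sequentially Cohen--Macaulay if and only if every $\calR(\calC_i)$ (resp. every $\calR'(\calC_i)$) is Cohen--Macaulay.

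The heart of the proof is then the per-piece comparison. For the Cohen--Macaulay module $C_i$ equipped with the filtration $\calC_i$ whose $F_1$ contains a $C_i$-regular element, I would invoke the Trung--Ikeda type criterion: $t^{-1}$ is a nonzerodivisor on $\calR'(\calC_i)$ with $\calR'(\calC_i)/t^{-1}\calR'(\calC_i) \cong \calG(\calC_i)$, so $\calR'(\calC_i)$ is Cohen--Macaulay if and only if $\calG(\calC_i)$ is, while for the ordinary Rees module one has that $\calR(\calC_i)$ is Cohen--Macaulay if and only if $\calG(\calC_i)$ is Cohen--Macaulay and $\rma(\calG(\calC_i)) < 0$. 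Combining the two gives
$$
\calR(\calC_i)\ \text{is Cohen--Macaulay} \iff \calR'(\calC_i)\ \text{is Cohen--Macaulay and}\ \rma(\calG(\calC_i)) < 0 .
$$
Running this over all $i$ and feeding it into the previous paragraph yields that $\calR(\calM)$ is sequentially Cohen--Macaulay if and only if $\calR'(\calM)$ is sequentially Cohen--Macaulay and $\rma(\calG(\calC_i)) < 0$ for every $i$. Finally Theorem~\ref{1.2} rewrites ``$\calR'(\calM)$ is sequentially Cohen--Macaulay'' as ``$\calG(\calM)$ is sequentially Cohen--Macaulay and $\{\calG(\calD_i)\}_{0\le i\le \ell}$ is the dimension filtration of $\calG(\calM)$'', which is exactly condition (2); the concluding assertion is immediate, since (1) forces each $\calR'(\calC_i)$ to be Cohen--Macaulay.

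The main obstacle I anticipate is the second step: proving that $\{\calR(\calD_i)\}$ and $\{\calR'(\calD_i)\}$ really are the dimension filtrations, i.e. that passing to Rees modules is compatible with the dimension filtration of $M$. This requires controlling $\Ass_{\calR}\calR(\calC_i)$ and $\Ass_{\calR'}\calR'(\calC_i)$ so as to rule out lower-dimensional embedded primes, and it is precisely here that the hypotheses that $M$ be sequentially Cohen--Macaulay and that $F_1 \nsubseteq \p$ for all $\p \in \Ass_R M$ are indispensable. A second, more computational obstacle is the local-cohomology comparison underlying the $\rma$-criterion, where one must track the graded pieces of $\H^{\bullet}_{\M}$ across the exact sequences linking $\calR(\calC_i)$, $\calR'(\calC_i)$ and $\calG(\calC_i)$ and verify that the relevant vanishing in nonnegative degrees is controlled exactly by $\rma(\calG(\calC_i)) < 0$.
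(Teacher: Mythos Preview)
Your proposal is correct and follows essentially the same approach as the paper. Both arguments reduce via Lemma~\ref{5.9} to the Cohen--Macaulayness of each $\calR(\calC_i)$ and then invoke the Trung--Ikeda type criterion (Corollary~\ref{5.8}); your only difference is that you route the equivalence through $\calR'(\calC_i)$ and Theorem~\ref{1.2}, whereas the paper passes directly from $\calR(\calC_i)$ to $\calG(\calC_i)$ via Corollary~\ref{5.8} and leaves the final assertion about $\calR'(\calM)$ implicit---your detour makes that last claim explicit at no real extra cost.
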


In Section 4 we focus our attention on the case of graded rings. In the last section we will explore the application of Theorem \ref{6.7} to the Stanley-Reisner algebras of shellable complexes (Theorem \ref{7.4}).

%%%%%%%%%%%%%%%%%%%%%%%%%%%%%%%%%%%%%%%%%%%%%%%%%%%%%%%%%%%%%%%%%%%%%%%%%%%%%%%%%%%%%%%%%%%%%%%%%%%  section 2   %%%%%%%%%%%%%%%%%%%%%%%%%%%%%%%%%%%%%%%%%%%%%%%%%%%%%%%%%%%%%%%%%%%%%%%%%%%%%%%%%%%%%%%%%%%%%%%%%%%%%%%%%%%%%%%%%%%%%%%%%%%%%%

\section{Preliminaries}
In this section we summarize some basic results on sequentially Cohen-Macaulay properties and filtration of ideals and modules, which we will use throughout this paper. 

Let $R$ be a Noetherian ring, $M \ne (0)$ a finitely generated $R$-module of dimension $d$. We put
$$
\Assh_R M = \{\p \in \Supp_R M \mid \dim R/\p = d\}.
$$
For each $n \in \Bbb Z$, there exists the largest $R$-submodule $M_n$ of $M$ with $\dim_R M_n \le n$.
Let 
\begin{eqnarray*}
\calS (M) &=& \{\dim_R N \mid N \ \text{is~an~}R\text{-submodule~of~}M, N \ne (0)\} \\
                 &=& \{\dim R/\p \mid \p \in \Ass_RM \}.
\end{eqnarray*}
We set $\ell = \sharp \calS (M)$ and write $\calS (M) = \{d_1< d_2 < \cdots < d_\ell = d\}$. 
Let $D_i = M_{d_i}$ for each $1 \le i \le \ell$. 
We then have a filtration
$$
\calD : D_0 :=(0) \subsetneq D_1 \subsetneq  D_2 \subsetneq  \ldots \subsetneq D_{\ell} = M
$$
of $R$-submodules of $M$, which we call {\it the dimension filtration of $M$}.
We put $C_i = D_i/D_{i-1}$ for every $1 \le i \le \ell$. 

\begin{defn}[\cite{Sch, St}]\label{3.1}
We say that $M$ is {\it a sequentially Cohen-Macaulay $R$-module}, if $C_i$ is Cohen-Macaulay for every $1 \le i \le \ell$. The ring $R$ is called {\it a sequentially Cohen-Macaulay ring}, if $\dim R < \infty$ and $R$ is a sequentially Cohen-Macaulay module over itself. 
\end{defn}

\noindent The typical examples of sequentially Cohen-Macaulay ring is the Stanley-Reisner algebra $k[\Delta]$ of a shellable complex $\Delta$ over a field $k$. Also every one-dimensional Noetherian local ring is sequentially Cohen-Macaulay. Moreover, if $M$ is a Cohen-Macaulay module over a Noetherian local ring, then $M$ is sequentially Cohen-Macaulay, and the converse holds if $M$ is unmixed.

\if0
 We now remark the characterizations of the dimension filtration.
Let 
$$
(0) = \bigcap_{\p \in \Ass_R M}M(\p)
$$
be a primary decomposition of $(0)$ in $M$, where $M(\p)$ stands for the $R$-submodule of $M$ with $\Ass_R M/M(\p) = \{ \p \}$ for all $\p \in \Ass_R M$.

We begin with the following well-known facts, which play an important role in our paper. 

\begin{prop}$($\cite[Proposition 2.2, Corollary 2.3]{Sch}$)$\label{3.2}
The following assertions hold true.
\begin{enumerate}[$(1)$]
\item $D_i = \bigcap_{\dim R/\p \ge d_{i+1}}M(\p)$ for $0 \le i \le \ell-1$.
\item $\Ass_R C_i = \{ \p \in \Ass_R M \mid \dim R/\p = d_i\}$, $\Ass_R D_i = \{ \p\in \Ass_R M \mid \dim R/\p \le d_i\}$ for all $1 \le i \le \ell$ and $\Ass_R M/D_i = \{ \p\in \Ass_R M \mid \dim R/\p \ge d_{i+1}\}$ for all $1 \le i \le \ell-1$.
\end{enumerate}
\end{prop}

\begin{thm}$($\cite[Theorem 2.3]{GHS}$)$\label{3.3}
Let $\mathcal M = \{M_i \}_{0 \leq i \leq t}~ (t>0)$ be a family of $R$-submodules of $M$ such that 
\begin{enumerate}[{\rm (1)}]
\item $M_0 = (0) \subsetneq  M_1 \subsetneq M_2 \subsetneq \ldots \subsetneq M_t = M $ and
\item $\dim _RM_{i-1} < \dim_RM_{i}$ for all $1 \leq i \leq t$.
\end{enumerate}
Assume that $\Ass_R M_i/M_{i-1} = \Assh_R M_i/M_{i-1}$ for all $1\leq i \leq t$. Then $t=\ell$ and $M_i = D_i$ for every $0\leq i \leq \ell$.
\end{thm}

We then have the following.

\begin{cor}$($\cite[Proposition 4.3]{Sch}$)$\label{3.4}
Suppose that $R$ is a local ring. Then $M$ is a sequentially Cohen-Macaulay $R$-module if and only if $M$ admits a Cohen-Macaulay filtration, that is, a family $\mathcal M = \{M_i\}_{1\leq i\leq t}~ (t>0)$ of $R$-submodules of $M$ with
$$
M_0 = (0) \subsetneq  M_1 \subsetneq M_2 \subsetneq \ldots \subsetneq M_t = M 
$$
such that
\begin{enumerate}[{\rm (1)}]
\item $\dim _R M_{i-1} < \dim_R M_{i}$ for all $1 \leq i \leq t$.
\item $M_i/M_{i-1}$ is a Cohen-Macaulay $R$-module for all $1\leq i \leq t$.
\end{enumerate}

\end{cor}

\fi

Firstly let us note the non-zerodivisor characterization of sequentially Cohen-Macaulay modules.

\begin{prop}\label{3.5}
Let $(R, \m)$ be a Noetherian local ring, $M \neq (0)$ a finitely generated $R$-module. Let $x \in \m$ be a non-zerodivisor on $M$. Then the following conditions are equivalent.
\begin{enumerate}[$(1)$]
\item $M$ is a sequentially Cohen-Macaulay $R$-module.
\item $M/xM$ is a sequentially Cohen-Macaulay $R/(x)$-module and $\{D_i/xD_i\}_{0 \le i \le \ell}$ is the dimension filtration of $M/xM$.
\end{enumerate}
\end{prop}

\begin{proof}
Notice that $x \in \m$ is a non-zerodivisor on $C_i$ and $D_i$ for all $1\le i\le \ell$ (See \cite[Corollary 2.3]{Sch}). Therefore we get a filtration
$$
D_0/xD_0 = (0) \subsetneq D_1/xD_1 \subsetneq \cdots \subsetneq D_{\ell}/xD_{\ell} = M/xM
$$
of $R/(x)$-submodules of $M/xM$. Then the assertion is a direct consequence of \cite[Theorem 2.3]{GHS}.
\end{proof}

\noindent The implication $(2) \Rightarrow (1)$ is not true without the condition that $\{D_i/xD_i\}_{0 \le i \le \ell}$ is the dimension filtration of $M/xM$. For instance, let $R$ be a $2$-dimensional Noetherian local domain of depth $1$. Then $R/(x)$ is sequentially Cohen-Macaulay for every $0 \neq x \in R$, but $R$ is not. Besides this, let $I$ be an $\m$-primary ideal in a regular local ring $(R, \m)$ of dimension $2$. Then $I$ is not a sequentially Cohen-Macaulay $R$-module, even though $I/xI$ is, where $0 \neq x \in \m$. 
These examples show that \cite[Theorem 4.7]{Sch} is not true in general.

From now on, we shall quickly review some preliminaries on filtrations of ideals and modules. Let $R$ be a commutative ring, $\calF = \{F_n\}_{n \in \Bbb Z}$ a filtration of ideals of $R$, that is, $F_n$ is an ideal of $R$, $F_n \supseteq F_{n+1}$, $F_m F_n \subseteq F_{m+n}$ for all $m, n \in \Bbb Z$ and $F_0 = R$. Then we put
$$
\calR=\calR(\calF) = \sum_{n \ge 0}F_nt^n \subseteq R[t], \ \ \calR'=\calR'(\calF) = \sum_{n \in \Bbb Z}F_nt^n \subseteq R[t, t^{-1}]
$$
and call them {\it the Rees algebra, the extended Rees algebra of $R$ with respect to $\calF$}, respectively. Here $t$ stands for an indeterminate over $R$.

Let $M$ be an $R$-module, $\calM=\{M_n\}_{n \in \Bbb Z}$ an $\calF$-filtration of $R$-submodules of $M$, that is, $M_n$ is an $R$-submodule of $M$, $M_n \supseteq M_{n+1}$, $F_m M_n \subseteq M_{m+n}$ for all $m, n \in \Bbb Z$ and $M_0 = M$. We set
$$
\calR(\calM)= \sum_{n \ge 0} t^n \otimes M_n \subseteq R[t] \otimes_R M, \ \ \calR'(\calM) = \sum_{n \in \Bbb Z} t^n \otimes M_n \subseteq R[t, t^{-1}] \otimes_R M
$$
which we call  {\it the Rees module, the extended Rees module of $M$ with respect to $\calM$}, respectively, where
$$
t^n \otimes M_n = \{ t^n \otimes x \mid x \in M_n\} \subseteq R[t, t^{-1}] \otimes_R M
$$
for all $n \in \Bbb Z$. Then $\calR(\calM)$ (resp. $\calR'(\calM)$) is a graded module over $\calR$ (resp. $\calR'$). 

If $F_1 \neq R$, then we define {\it the associated graded ring $\calG$ of $R$ with respect to $\calF$ and the associated graded module $\calG(\calM)$ of $M$ with respect to $\calM$} as follows.
$$
\calG = \calG(\calF) = \calR'/{u\calR'}, \ \  \calG(\calM) = \calR'(\calM)/{u\calR'(\calM)},
$$
where $u = t^{-1}$. Then $\calG(\calM)$ is a graded module over $\calG$ and the composite map
$$
\psi : \calR(\calM) \overset{i}{\longrightarrow} \calR'(\calM) \overset{\varepsilon}{\longrightarrow} \calG(\calM)
$$
is surjective and $\Ker {\psi} = u\calR'(\calM) \cap \calR(\calM) = u[\calR(\calM)]_{+}$, where $[\calR(\calM)]_{+} = \sum_{n>0}t^n \otimes M_n$.

For the rest of this section, we assume that $F_1\neq R $, $\calR = \calR(\calF)$ is Noetherian and $\calR(\calM)$ is finitely generated. 
Then we have the following. The proof of Proposition \ref{2.3} is based on the results \cite[Proposition 5.1]{CGT}. Since it plays an important role in this paper, let us give a brief proof for the sake of completeness. 

\begin{prop}\label{2.3}
The following assertions hold true.
\begin{enumerate}[$(1)$]
\item Let $P \in \Ass_{\calR}\calR(\calM)$. Then $\p \in \Ass_R M$, $P = \p R[t]\cap \calR$ and 
\begin{eqnarray*}
\dim \calR/P = \left\{
 \begin{array}{l}
  \dim R/{\p} + 1 \ \ \ \text{if} \ \dim R/{\p} <\infty,  F_1 \nsubseteq \p, \\
  \dim R/{\p}    \ \ \ \ \ \   \ \ \text{otherwise},
 \end{array}
\right.
\end{eqnarray*}
where  $\p = P \cap R$.
\item $\p R[t] \cap \calR \in \Ass_{\calR} \calR(\calM)$ for every $\p \in \Ass_R M$.
\item Suppose that $M \neq (0)$, $d = \dim_R M < \infty$ and there exists $\p \in \Assh_R M$ such that $F_1 \nsubseteq \p$. Then $\dim_{\calR}\calR(\calM) = d + 1$. 
\end{enumerate}
\end{prop}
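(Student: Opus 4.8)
The plan is to use that $\calR(\calM)$ is a $\mathbb Z$-graded module over the $\mathbb Z$-graded Noetherian ring $\calR$, so that each of its associated primes is a graded prime and arises as the annihilator of a nonzero homogeneous element; writing such an element as $xt^m$ with $x\in M_m\subseteq M$ reduces every computation to the level of the $R$-module $M$. For (1), let $P\in\Ass_\calR\calR(\calM)$ and write $P=\Ann_\calR(xt^m)$ with $0\neq x\in M_m$. For a homogeneous $at^n\in\calR$ (so $a\in F_n$) one has $(at^n)(xt^m)=(ax)t^{n+m}\in M_{n+m}t^{n+m}$, which vanishes exactly when $ax=0$ in $M$; hence the degree-$n$ component of $P$ is $(F_n\cap\p)t^n$ where $\p:=\Ann_R(x)$, and in particular $P\cap R=\p$. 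Since $P$ is prime, $\p=P\cap R$ is prime, so $\p=\Ann_R(x)\in\Ass_R M$, and summing over $n$ gives $P=\sum_{n\ge0}(F_n\cap\p)t^n=\p R[t]\cap\calR$. This yields the first two assertions of (1).

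For (2), I would run the same computation in reverse. Given $\p\in\Ass_R M$, choose $x\in M$ with $\Ann_R(x)=\p$ and view $x$ as a nonzero homogeneous element of $[\calR(\calM)]_0=M_0=M$. The identical calculation shows $\Ann_\calR(x)=\sum_{n\ge0}(F_n\cap\p)t^n=\p R[t]\cap\calR$, which is prime, being the contraction of the prime $\p R[t]$ of $R[t]$. Hence $\p R[t]\cap\calR\in\Ass_\calR\calR(\calM)$, and combined with (1) this gives the bijection $P\leftrightarrow P\cap R$ between $\Ass_\calR\calR(\calM)$ and $\Ass_R M$.

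The dimension formula in (1) is the technical heart, and I expect it to be the main obstacle. With $P=\p R[t]\cap\calR$ and $A:=R/\p$ (a domain), the inclusion $\calR\hookrightarrow R[t]$ induces an isomorphism $\calR/P\cong\sum_{n\ge0}\overline{F_n}t^n=:\overline\calR\subseteq A[t]$, the Rees algebra of the filtration $\overline\calF=\{\overline{F_n}\}$ induced on $A$, where $\overline{F_n}$ is the image of $F_n$. If $F_1\subseteq\p$, then $F_n\subseteq F_1\subseteq\p$ for all $n\ge1$, so $\overline{F_n}=0$ and $\overline\calR=A$, giving $\dim\calR/P=\dim R/\p$ (this also covers $\dim R/\p=\infty$). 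If instead $F_1\not\subseteq\p$ and $\dim R/\p=s<\infty$, then $\overline{F_1}\neq0$ and I must show $\dim\overline\calR=s+1$; this is the standard dimension formula for Noetherian Rees algebras over a domain, which is precisely the content borrowed from \cite[Proposition 5.1]{CGT}. Its mechanism is: the upper bound $\dim\overline\calR\le s+1$ follows because $\overline\calR$ is Noetherian, so a Veronese subalgebra $\overline\calR^{(e)}$ coincides with the ordinary Rees algebra $A[\overline{F_e}t^e]$ and is standard graded over $A$, while $\dim\overline\calR=\dim\overline\calR^{(e)}$; the lower bound comes from the generic fiber $\overline\calR\otimes_A\operatorname{Frac}(A)=\operatorname{Frac}(A)[t]$ over $(0)\in\Spec A$, which forces a saturated chain of length $s+1$. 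The only genuine use of the hypothesis is to guarantee $\overline{F_e}\neq0$ (so that this fiber is one-dimensional), which holds since $F_1\not\subseteq\p$ forces $0\neq\overline{F_1}^e\subseteq\overline{F_e}$ in the domain $A$.

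Finally, (3) follows by assembling these facts. Since $\dim_\calR\calR(\calM)=\max\{\dim\calR/P\mid P\in\Ass_\calR\calR(\calM)\}$ and every $\p\in\Ass_R M$ satisfies $\dim R/\p\le\dim_R M=d<\infty$, part (1) gives $\dim\calR/P\le\dim R/\p+1\le d+1$ for all such $P$. The hypothesis provides $\p\in\Assh_R M$ with $F_1\not\subseteq\p$; for the corresponding $P=\p R[t]\cap\calR$ one has $\dim R/\p=d$ and hence $\dim\calR/P=d+1$ by the formula, attaining the maximum. Therefore $\dim_\calR\calR(\calM)=d+1$.
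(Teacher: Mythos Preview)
Your proof is correct and reaches the same conclusions as the paper, with only a minor difference in the route taken for (1). The paper argues by embedding $\calR(\calM)\subseteq R[t]\otimes_R M$: any $P\in\Ass_\calR\calR(\calM)$ is then an associated prime of the larger module, hence the contraction $Q\cap\calR$ of some $Q\in\Ass_{R[t]}(R[t]\otimes_R M)=\{\p R[t]\mid\p\in\Ass_R M\}$ (by flat base change), which immediately gives $P=\p R[t]\cap\calR$. You instead compute the annihilator of a homogeneous element directly, obtaining $P_n=(F_n\cap\Ann_R x)t^n$ and reading off $\p=P\cap R=\Ann_R x$; primality of $\p$ then comes from primality of $P$ rather than from knowing $\p\in\Ass_R M$ a priori. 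Both approaches are short and standard; yours is a little more hands-on, the paper's a little more structural, but they are essentially equivalent. For (2) and the reduction $\calR/P\cong\calR(\overline{\calF})$ with the dimension formula delegated to a reference (the paper cites \cite[Part II, Lemma (2.2)]{GN} rather than \cite{CGT}), and for (3), your argument matches the paper's. One small omission: in the ``otherwise'' branch you should also note the trivial case $F_1\nsubseteq\p$ with $\dim R/\p=\infty$, where $\dim\overline\calR\ge\dim A=\infty$ so the formula holds vacuously.
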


\begin{proof}
$(1)$~~ Let $P \in \Ass_{\calR}\calR(\calM)$. Then $P \in \Ass_{\calR}R[t]\otimes_R M$, so that $P = Q \cap \calR$ for some
$$
Q \in \Ass_{R[t]}R[t]\otimes_R M = \bigcup_{\p \in \Ass_R M}\Ass_{R[t]}R[t]/{\p R[t]}.
$$
Thus there exists $\p \in \Ass_R M$ such that $\p = Q \cap R$ and $Q = \p R[t]$. Therefore $P = \p R[t] \cap \calR$, $\p = P \cap R$. Put $\overline{R} = R/{\p}$. Then $\overline{\calF}= \{F_n \overline{R}\}_{n \in \Bbb Z}$ is a filtration of ideals of $\overline{R}$ and $\calR/P \cong \calR(\overline{\calF})$ as graded $R$-algebras. Hence the assertion holds by \cite[Part II, Lemma (2.2)]{GN}.

$(2)$~~Let $\p \in \Ass_R M$. We write $\p = (0):_R x$ for some $x \in M$. Then $(0):_{\calR}\xi = \p R[t] \cap \calR $ where 
$\xi = 1 \otimes x \in [\calR(\calM)]_0$. 

$(3)$~~Follows from the assertions $(1)$, $(2)$.
\end{proof}

\begin{cor}
Suppose that $R$ is a local ring and $M \neq (0)$. Then
\begin{eqnarray*}
\dim_{\calR}\calR(\calM) = \left\{
 \begin{array}{l}
  \dim_R M + 1 \ \ \ \text{if} \ \ \text{there exists }\p \in \Assh_R M \text{ such that }  F_1 \nsubseteq \p, \\
  \dim_R M    \ \ \ \ \ \   \ \ \text{otherwise}.
 \end{array}
\right.
\end{eqnarray*}
\end{cor}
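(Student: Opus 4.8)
The plan is to derive everything from Proposition \ref{2.3}, since the dimension of a finitely generated module over a Noetherian ring is governed by its associated primes. Concretely, I would use the identity
$$
\dim_{\calR}\calR(\calM) = \max\{\dim \calR/P \mid P \in \Ass_{\calR}\calR(\calM)\},
$$
which holds because the dimension is attained at a minimal prime of the support and every such minimal prime is associated. As $R$ is local Noetherian we have $\dim R < \infty$, hence $\dim_R M < \infty$ and $\dim R/\p < \infty$ for every $\p \in \Ass_R M$; this makes the finiteness hypothesis in Proposition \ref{2.3}(1) automatic and justifies invoking the formula there unconditionally.

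Next I would pin down the associated primes using parts (1) and (2) of Proposition \ref{2.3}. By (1) each $P \in \Ass_{\calR}\calR(\calM)$ has the form $P = \p R[t]\cap \calR$ with $\p = P\cap R \in \Ass_R M$, and by (2) every $\p \in \Ass_R M$ arises in this way, so $\p \mapsto \p R[t]\cap \calR$ sets up a bijection between $\Ass_R M$ and $\Ass_{\calR}\calR(\calM)$. The displayed formula then reads
$$
\dim_{\calR}\calR(\calM) = \max_{\p \in \Ass_R M}\dim \calR/(\p R[t]\cap \calR),
$$
and substituting the value of each term from Proposition \ref{2.3}(1) gives $\dim R/\p + 1$ when $F_1 \nsubseteq \p$ and $\dim R/\p$ when $F_1 \subseteq \p$. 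Writing $d = \dim_R M$, the problem is thus reduced to evaluating this maximum.

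The case split is dictated by the statement. If some $\p \in \Assh_R M$ satisfies $F_1 \nsubseteq \p$, then this is exactly Proposition \ref{2.3}(3), yielding $\dim_{\calR}\calR(\calM) = d+1$; one sees directly that this $\p$ contributes $d+1$ while no prime contributes more, since $\dim R/\q \le d$ for all $\q \in \Ass_R M$. In the complementary case every $\p \in \Assh_R M$ has $F_1 \subseteq \p$, and since $\Assh_R M \subseteq \Ass_R M$ such a $\p$ contributes $\dim R/\p = d$, so the maximum is at least $d$. The one step that genuinely requires an argument—and hence the main (if modest) obstacle—is the matching upper bound: I would show that no associated prime contributes more than $d$. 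For $\p$ with $F_1 \subseteq \p$ this is immediate from $\dim R/\p \le d$. For $\p$ with $F_1 \nsubseteq \p$, the crucial observation is that such a $\p$ cannot lie in $\Assh_R M$ (every member of $\Assh_R M$ contains $F_1$ in this case), so $\dim R/\p \le d-1$ and therefore $\dim R/\p + 1 \le d$. Combining the lower and upper bounds gives $\dim_{\calR}\calR(\calM) = d$, completing the proof.
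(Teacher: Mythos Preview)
Your proposal is correct and is exactly the argument the paper leaves implicit: the corollary is stated immediately after Proposition~\ref{2.3} with no separate proof, and your derivation---computing $\dim_{\calR}\calR(\calM)$ as the maximum of $\dim \calR/P$ over $P\in\Ass_{\calR}\calR(\calM)$ via the bijection in parts (1) and (2), then splitting on whether some $\p\in\Assh_R M$ avoids $F_1$---is precisely the intended unpacking.
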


Similarly we are able to determine the structure of associated prime ideals of the extended Rees modules. 

\begin{prop}\label{2.4}
The following assertions hold true.
\begin{enumerate}[$(1)$]
\item Let $P \in \Ass_{\calR'}\calR'(\calM)$. Then $\p \in \Ass_R M$, $P = \p R[t, t^{-1}]\cap \calR'$ and $\dim \calR/P = \dim R/{\p} + 1$, where $\p = P \cap R$.
\item $\p R[t, t^{-1}] \cap \calR' \in \Ass_{\calR'} \calR'(\calM)$ for every $\p \in \Ass_R M$.
\item Suppose that $M \neq (0)$. Then $\dim_{\calR'} \calR'(\calM) = \dim_R M + 1$. 
\end{enumerate}
\end{prop}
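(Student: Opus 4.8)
The plan is to follow the proof of Proposition \ref{2.3} closely, replacing the polynomial ring $R[t]$ by the Laurent polynomial ring $R[t,t^{-1}]$ and systematically exploiting that $u=t^{-1}$ is a non-zerodivisor on $\calR'(\calM)$. First I would record the two structural facts that drive everything. Writing $N = R[t,t^{-1}]\otimes_R M = M[t,t^{-1}]$, multiplication by $u$ is just a degree shift on $N$, hence injective, so $u$ is a non-zerodivisor on the graded submodule $\calR'(\calM)\subseteq N$; and inverting $u$ recovers $\calR'[u^{-1}] = R[t,t^{-1}]$ together with $\calR'(\calM)[u^{-1}] = N$. Moreover, since $\calR=\calR(\calF)$ is Noetherian, $\calR'=\calR[u]$ is a finitely generated $\calR$-algebra and hence Noetherian, and $\calR'(\calM)$ is generated over $\calR'$ by the finitely generated $\calR$-module $\calR(\calM)$, so it is a finitely generated $\calR'$-module. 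This puts us in the setting where the theory of associated primes and its compatibility with localization are available.

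For (1) and (2) simultaneously, I would use that $u$ is a non-zerodivisor on $\calR'(\calM)$, so $u\notin P$ for every $P\in\Ass_{\calR'}\calR'(\calM)$. The localization correspondence for associated primes over the multiplicative set $\{u^k\}_{k\ge 0}$ then yields a bijection
$$
\Ass_{\calR'}\calR'(\calM) \xrightarrow{\ \sim\ } \Ass_{R[t,t^{-1}]}\bigl(M[t,t^{-1}]\bigr), \qquad P\longmapsto P\,\calR'[u^{-1}] = P\,R[t,t^{-1}],
$$
with inverse given by contraction $\mathfrak Q\mapsto\mathfrak Q\cap\calR'$. The right-hand side is completely understood: from $\Ass_{R[t]}M[t] = \{\p R[t]\mid \p\in\Ass_R M\}$ and the fact that $t$ lies in none of these primes, inverting $t$ gives $\Ass_{R[t,t^{-1}]}M[t,t^{-1}] = \{\p R[t,t^{-1}]\mid \p\in\Ass_R M\}$. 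Tracing the bijection back shows that every $P\in\Ass_{\calR'}\calR'(\calM)$ equals $\p R[t,t^{-1}]\cap\calR'$ for a unique $\p\in\Ass_R M$, and conversely that each such contraction is associated; furthermore $\p = P\cap R$ since $\p R[t,t^{-1}]\cap R = \p$. This is precisely assertions (1) and (2).

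For the dimension statement in (1), I would identify $\calR'/P$ with the extended Rees algebra of a quotient. Setting $\overline R = R/\p$ and $\overline{\calF} = \{F_n\overline R\}_{n\in\bbZ}$, the inclusion $\calR'\subseteq R[t,t^{-1}]$ induces a graded isomorphism $\calR'/P \cong \calR'(\overline{\calF})\subseteq \overline R[t,t^{-1}]$. It then remains to see that the extended Rees algebra of any Noetherian filtration on $\overline R$ has dimension $\dim\overline R + 1$, and here the behaviour is uniform, in contrast to the ordinary Rees algebra of Proposition \ref{2.3}, precisely because $u$ survives: $u$ is a non-zerodivisor on $\calR'(\overline{\calF})$ with $\calR'(\overline{\calF})/u\calR'(\overline{\calF}) = \calG(\overline{\calF})$ and $\dim\calG(\overline{\calF}) = \dim\overline R$, whence $\dim\calR'(\overline{\calF}) = \dim\overline R + 1$; this is also the content of the cited \cite[Part II, Lemma (2.2)]{GN}. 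Thus $\dim\calR'/P = \dim R/\p + 1$.

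Finally, (3) falls out of (1) and (2): when $M\neq(0)$ the set $\Ass_R M$ is non-empty, and so
$$
\dim_{\calR'}\calR'(\calM) = \max\{\dim\calR'/P \mid P\in\Ass_{\calR'}\calR'(\calM)\} = \max\{\dim R/\p + 1\mid \p\in\Ass_R M\} = \dim_R M + 1.
$$
The step I expect to demand the most care is the associated-prime localization correspondence together with the identification $\calR'(\calM)[u^{-1}] = M[t,t^{-1}]$: one must verify the Noetherian and finite-generation hypotheses so that $\Ass$ commutes with inverting $u$, check the contraction formula $\p R[t,t^{-1}]\cap R = \p$, and confirm the graded isomorphism $\calR'/P\cong\calR'(\overline{\calF})$. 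The remaining ingredients, namely the description of $\Ass$ over Laurent polynomial rings and the dimension of the associated graded ring, are standard and run parallel to the $R[t]$ computation already carried out for Proposition \ref{2.3}.
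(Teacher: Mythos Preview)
Your proof is correct. The paper itself omits the proof of Proposition~\ref{2.4}, saying only ``Similarly we are able to determine the structure of associated prime ideals of the extended Rees modules,'' so the intended argument is the direct analogue of the proof of Proposition~\ref{2.3}: embed $\calR'(\calM)$ into $R[t,t^{-1}]\otimes_R M$ to show that every $P\in\Ass_{\calR'}\calR'(\calM)$ is the contraction of some $\p R[t,t^{-1}]$ with $\p\in\Ass_R M$, and then verify the converse by exhibiting an explicit homogeneous element whose annihilator equals $\p R[t,t^{-1}]\cap\calR'$.

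Your route is genuinely different in one respect: instead of treating (1) and (2) separately, you exploit the feature that distinguishes the extended Rees module from the ordinary one, namely that $u=t^{-1}$ is a non-zerodivisor on $\calR'(\calM)$, and obtain both directions at once from the bijection $\Ass_{\calR'}\calR'(\calM)\cong\Ass_{R[t,t^{-1}]}M[t,t^{-1}]$ induced by inverting $u$. This is cleaner and also explains conceptually why the case distinction ``$F_1\subseteq\p$ or not'' present in Proposition~\ref{2.3}(1) disappears here. The remaining steps (the identification $\calR'/P\cong\calR'(\overline{\calF})$ and the appeal to \cite[Part II, Lemma~(2.2)]{GN} for the dimension formula) coincide with what the paper's template provides. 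One small caution: your auxiliary remark that $\dim\calR'(\overline{\calF})=\dim\calG(\overline{\calF})+1$ via the non-zerodivisor $u$ implicitly uses $H$-locality to guarantee equality rather than just an inequality, so it is safest simply to invoke \cite{GN} directly, as you in fact do.
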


Apply Proposition \ref{2.4}, we get the following.

\begin{cor}\label{2.5}
Suppose $R$ is a local ring and $M \neq (0)$. 
Then $\dim_{\calG}\calG(\calM) = \dim_R M$.
\end{cor}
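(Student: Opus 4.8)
The plan is to descend from the extended Rees module via the element $u = t^{-1}$, for which Proposition \ref{2.4}(3) already computes the crucial dimension. First I would record that $u$ is a nonzerodivisor on $\calR'(\calM)$: since $t^{-1}$ is a unit of $R[t,t^{-1}]$, multiplication by $u$ is injective on $R[t,t^{-1}]\otimes_R M$, hence on its submodule $\calR'(\calM)$. By the very definition of the associated graded module, $\calG(\calM) = \calR'(\calM)/u\calR'(\calM)$, so there is a short exact sequence of graded $\calR'$-modules
$$
0 \longrightarrow \calR'(\calM) \overset{u}{\longrightarrow} \calR'(\calM) \longrightarrow \calG(\calM) \longrightarrow 0.
$$

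Next I would separate the two bookkeeping issues. Because $\calG = \calR'/u\calR'$ and $u$ annihilates $\calG(\calM)$, restriction of scalars along the surjection $\calR' \twoheadrightarrow \calG$ identifies $\Supp_{\calG}\calG(\calM)$ with $\Supp_{\calR'}\calG(\calM)$, whence $\dim_{\calG}\calG(\calM) = \dim_{\calR'}\calG(\calM)$. Thus it suffices to establish the dimension drop
$$
\dim_{\calR'}\calR'(\calM)/u\calR'(\calM) = \dim_{\calR'}\calR'(\calM) - 1,
$$
for then Proposition \ref{2.4}(3) gives $\dim_{\calR'}\calR'(\calM) = \dim_R M + 1$ (here the hypothesis that $R$ is local is what guarantees $\dim_R M < \infty$ and $\Assh_R M \ne \emptyset$), and the claimed equality $\dim_{\calG}\calG(\calM) = \dim_R M$ follows at once.

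The heart of the matter, and the step I expect to be the main obstacle, is precisely this dimension drop, which is delicate because $u$ is homogeneous of \emph{negative} degree and hence does not lie in the graded maximal ideal of $\calR'$, so the usual local principal-ideal argument does not apply verbatim. For the upper bound I would argue on supports: any $Q \in \Supp_{\calR'}\calG(\calM) = \Supp_{\calR'}\calR'(\calM)\cap V(u)$ contains some minimal prime $P$ of $\Supp_{\calR'}\calR'(\calM)$; since such a $P$ is associated and $u$ is a nonzerodivisor we have $u \notin P$, so $Q \supsetneq P$ and $\height(Q/P) \ge 1$ in the domain $\calR'/P$, whence the general inequality $\height(Q/P) + \dim \calR'/Q \le \dim \calR'/P \le \dim_R M + 1$ forces $\dim \calR'/Q \le \dim_R M$. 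For the lower bound I would exploit the explicit associated primes of Proposition \ref{2.4}: choose $\p \in \Assh_R M$, so that $P = \p R[t,t^{-1}]\cap \calR' \in \Ass_{\calR'}\calR'(\calM)$ with $\dim \calR'/P = \dim_R M + 1$; then $\calR'/P$ is the extended Rees algebra $\calR'(\overline{\calF})$ of the domain $\overline{R} = R/\p$, and from $\Supp_{\calR'}\calG(\calM) \supseteq V(P + (u))$ we get $\dim_{\calR'}\calG(\calM) \ge \dim \calR'(\overline{\calF})/u\calR'(\overline{\calF}) = \dim \calG(\overline{\calF})$. Invoking the ring-level computation $\dim \calG(\overline{\calF}) = \dim \overline{R} = \dim_R M$ for the associated graded ring of a filtration over a domain (cf. \cite[Part II, Lemma (2.2)]{GN}) closes the gap. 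Combining the two bounds yields the dimension drop, and hence $\dim_{\calG}\calG(\calM) = \dim_R M$.
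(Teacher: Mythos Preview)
Your argument is correct, but it is built around a misconception that makes it longer than necessary. You assert that $u=t^{-1}$, being of negative degree, ``does not lie in the graded maximal ideal of $\calR'$''; this is false. Because $R$ is local and $F_1\neq R$, a homogeneous element $at^n\in\calR'$ is a unit if and only if $n=0$ and $a\in R\setminus\m$ (for $n\neq 0$ the would-be inverse $a^{-1}t^{-n}$ fails to lie in $\calR'$, since either $a\in F_n\subseteq\m$ or $a^{-1}\in F_{-n}\subseteq\m$ is required). Hence $\calR'$ is $H$-local with unique graded maximal ideal $\fkN=\m\oplus\bigoplus_{n\neq 0}[\calR']_n$, and in particular $u\in\fkN$. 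Consequently the ``usual local principal-ideal argument'' that you set aside applies verbatim: $u\in\fkN$ is a nonzerodivisor on the nonzero finitely generated graded module $\calR'(\calM)$, so Nakayama at $\fkN$ gives $\calG(\calM)\neq(0)$, and the standard dimension drop yields $\dim_{\calG}\calG(\calM)=\dim_{\calR'}\calR'(\calM)-1=\dim_RM$. This is exactly the route the paper intends when it records the corollary as an immediate consequence of Proposition~\ref{2.4}.

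That said, your upper/lower-bound computation on supports is a legitimate alternative that reproves the dimension drop by hand; the logic there does not depend on the mistaken remark about $\fkN$ and goes through as written. The only soft spot is the citation for $\dim\calG(\overline{\calF})=\dim\overline{R}$: in the paper \cite[Part~II, Lemma~(2.2)]{GN} is invoked for the dimension of the \emph{Rees} algebra, not the associated graded ring, so you may prefer to quote the graded-ring fact directly (or simply observe that it follows from the $H$-local argument above applied to the domain $\overline{R}=R/\p$).
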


\if0
\begin{proof}
Since $R$ is local, $\calR'$ is an $H$-local ring. Let $\mathfrak{N}$ be the unique graded maximal ideal of $\calR'$. Then we get $\calR'(\calM)_{\mathfrak{N}} \neq (0)$ and $u \in \mathfrak{N}$. 
Thanks to the Nakayama's lemma, $\calG(\calM)_{\mathfrak{N}} \neq (0)$, whence $\calG(\calM)\neq (0)$. Hence we have $\dim_{\calG}\calG(\calM) = \dim_R M$ by Proposition \ref{2.4}.
\end{proof}
\fi

%%%%%%%%%%%%%%%%%%%%%%%%%%%%%%%%%%%%%%%%%%%%%%%%%%%%%%%%%%%%%%%%%%%%%%%%%%%%%%%%%%%%%%%%%%%%%%%%%%%%%%%%%%%%%%%%%%%%%%%%%%%%%%%%%%%%%%%%%%%%%%%%%%%%%%%%%%%%%%%%%%%%%%%%%%%%%%%%%%%%%%%%%%%%%%%%%%%%%%%%%%%%%%%%%%%%%%%%%%%%%%%%%%%%%%%%%%%%%%%%%%%%%%%%%%%%%%%%%%%%%%%%%%%%%%%%%%%%%%%%%%%%%%%

\section{Proof of Theorem \ref{1.2} ~ and Theorem \ref{1.3}}

This section aims to prove Theorem \ref{1.2} and Theorem \ref{1.3}. In what follows, let $(R, \m)$ be a Noetherian local ring, $M \neq (0)$ a finitely generated $R$-module of dimension $d$. Let $\calF = \{F_n\}_{n\in \mathbb{Z}}$ be a filtration of ideals of $R$ with $F_1\neq R$, $\mathcal M = \{M_n\}_{n\in \mathbb{Z}}$ a $\calF$-filtration of $R$-submodules of $M$. We put $\a = \calR(\calF)_{+} = \sum_{n>0}F_nt^n$.

Throughout this section we assume that $\calR=\calR(\calF) $ is a Noetherian ring and $\calR(\calM)$ is finitely generated. Let $1 \le i \le \ell$. We set 
$$
\calD_i = \{M_n\cap D_i\}_{n\in \mathbb Z}, \ \ 
\calC_i = \{[(M_n\cap D_i)+D_{i-1}]/D_{i-1}\}_{n\in \mathbb Z}.
$$
Then $\mathcal D_i$ (resp. $\mathcal C_i$) is a $\calF$-filtration of $R$-submodules of $D_i$ (resp. $C_i$). Look at the following exact sequence
$$
0 \to [\calD_{i-1}]_n \to [\calD_i]_n \to [\calC_i]_n \to 0
$$
of $R$-modules for all $n \in \Bbb Z$. We then have the exact sequences
$$
0 \to \calR(\calD_{i-1}) \to \calR(\calD_i) \to \calR(\calC_i) \to 0
$$
$$
0 \to \calR'(\calD_{i-1}) \to \calR'(\calD_i) \to \calR'(\calC_i) \to 0 \ \ \text{and}
$$
$$
0 \to \calG(\calD_{i-1}) \to \calG(\calD_i) \to \calG(\calC_i) \to 0
$$
of graded modules. Since $\calR(\calD_i)$ is a finitely generated $\calR$-module, so is $\calR(\calC_i)$.

\begin{lem}$($cf. \cite[Proposition 5.1]{CGT}$)$\label{5.9}
$\{\calR'(\calD_i)\}_{0\le i \le \ell}$ is the dimension filtration of $\calR'(\calM)$. If  $F_1 \nsubseteq \p$ for every $\p \in \Ass_R M$, then $\{\calR(\calD_i)\}_{0\le i \le \ell}$ is the dimension filtration of $\calR(\calM)$.   
\end{lem}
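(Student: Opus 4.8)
The plan is to reduce both assertions, in each case, to the characterization of the dimension filtration in \cite[Theorem 2.3]{GHS}: a chain $0 = N_0 \subsetneq N_1 \subsetneq \cdots \subsetneq N_t = N$ with $\dim N_{j-1} < \dim N_j$ for all $j$ \emph{is} the dimension filtration of $N$ as soon as $\Ass_R(N_j/N_{j-1}) = \Assh_R(N_j/N_{j-1})$ for every $j$. Thus, for each of the proposed chains $\{\calR'(\calD_i)\}_{0\le i\le \ell}$ and $\{\calR(\calD_i)\}_{0\le i\le \ell}$, I would check three things: that it is a \emph{strict} chain, that the dimensions of its members increase strictly, and that each successive quotient is unmixed. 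The short exact sequences recorded just above identify the $i$-th quotient with $\calR'(\calC_i)$ (respectively $\calR(\calC_i)$), which is finitely generated by the remark following those sequences, so all three requirements become statements about the single graded module attached to $C_i$ and the filtration $\calC_i$. These I can read off directly from Proposition \ref{2.4} (respectively Proposition \ref{2.3}).

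The one geometric input needed from the dimension filtration of $M$ itself is that each $C_i = D_i/D_{i-1}$ satisfies $\Ass_R C_i = \Assh_R C_i$, every such prime having $\dim R/\p = d_i$ (see \cite{Sch}); in particular $\Ass_R C_i \subseteq \Ass_R M$. For the extended Rees module, Proposition \ref{2.4} applied to $(C_i, \calC_i)$ gives $\Ass_{\calR'}\calR'(\calC_i) = \{\p R[t,t^{-1}] \cap \calR' \mid \p \in \Ass_R C_i\}$ with $\dim \calR'/P = \dim R/\p + 1 = d_i + 1$ for every such $P$. Hence $\calR'(\calC_i)$ is unmixed of dimension $d_i + 1$. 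Strictness of the chain follows because $C_i \neq (0)$ forces $[\calR'(\calC_i)]_0 = C_i \neq (0)$, and $\dim \calR'(\calD_i) = d_i + 1$ by Proposition \ref{2.4}(3) is strictly increasing in $i$. The hypotheses of \cite[Theorem 2.3]{GHS} are then met, which proves the first assertion with no restriction on $\calF$.

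For the Rees algebra the same scheme applies with Proposition \ref{2.3} in place of Proposition \ref{2.4}, but here the colength formula is genuinely case-dependent: $\dim \calR/P$ equals $\dim R/\p + 1$ only when $F_1 \nsubseteq \p$, and equals $\dim R/\p$ otherwise. This is exactly where the hypothesis $F_1 \nsubseteq \p$ for all $\p \in \Ass_R M$ enters. Since $\Ass_R C_i \subseteq \Ass_R M$, the hypothesis forces $F_1 \nsubseteq \p$ for every $\p \in \Ass_R C_i$, so once more $\dim \calR/P = d_i + 1$ uniformly; thus $\calR(\calC_i)$ is unmixed of dimension $d_i + 1$, and $\dim \calR(\calD_i) = d_i + 1$ by Proposition \ref{2.3}(3), applied via the fact that every $\p \in \Assh_R D_i = \Ass_R C_i$ avoids $F_1$. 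Invoking \cite[Theorem 2.3]{GHS} a second time completes the argument.

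The main obstacle is precisely this dimension bookkeeping in the non-extended case. Without the assumption $F_1 \nsubseteq \p$, the associated primes of $C_i$, although all of one dimension $d_i$ over $R$, could split in $\calR$ into those with $F_1 \nsubseteq \p$ (contributing colength $d_i + 1$) and those with $F_1 \subseteq \p$ (contributing colength $d_i$); then $\calR(\calC_i)$ would fail to be unmixed and the layers $D_i$ would no longer match the dimension layers of $\calR(\calM)$. Verifying that the hypothesis rules this out, and that it is the only obstruction, is the heart of the proof, whereas for $\calR'$ the uniform formula of Proposition \ref{2.4} makes the verification automatic.
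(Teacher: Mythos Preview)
Your proposal is correct and follows essentially the same route as the paper: compute $\dim_{\calR'}\calR'(\calD_i)=d_i+1$ and verify that every $P\in\Ass_{\calR'}\calR'(\calC_i)$ has $\dim\calR'/P=d_i+1$ via Proposition~\ref{2.4}, then invoke \cite[Theorem~2.3]{GHS}; likewise with Proposition~\ref{2.3} in the non-extended case. The paper's proof is terser (it omits the explicit check $\Ass_R C_i\subseteq\Ass_R M$ and the discussion of why $F_1\nsubseteq\p$ is needed), but your added detail is accurate and the argument is the same.
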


\begin{proof}
Let $1 \le i \le \ell$.  Then $\dim_{\calR'}\calR'(\calD_i) = d_i + 1$, since $D_i \neq (0)$. Let $P \in \Ass_{\calR'}\calR'(\calC_i)$. Thanks to Proposition \ref{2.4}, we then have $\dim \calR'/P = d_i + 1 = \dim_{\calR'}\calR'(\calC_i)$. By using \cite[Theorem 2.3]{GHS}, $\{\calR'(\calD_i)\}_{0\le i \le \ell}$ is the dimension filtration of $\calR'(\calM)$. 
Similarly we obtain the last assertion.
\end{proof}

We now ready to prove Theorem \ref{1.2}.

\begin{proof}[Proof of Theorem \ref{1.2}]
The equivalence of conditions $(1)$ and $(2)$ is similar to the proof of Proposition \ref{3.5}. Let us make sure of the last assertion. Look at the following exact sequences
$$
0 \to \calR'(\calC_i) \overset{\varphi}{\to} R[t, t^{-1}]\otimes_R C_i \to X=\Coker{\varphi}  \to 0
$$
of graded $\calR'$-modules for $1 \le i \le \ell$. Since $\calR'(\calC_i)$ is a Cohen-Macaulay $\calR'$-module and $X_u= (0)$, we have $R[t, t^{-1}]\otimes_R C_i$ is Cohen-Macaulay. Therefore $M$ is a sequentially Cohen-Macaulay $R$-module, because $C_i$ is Cohen-Macaulay.
\end{proof}

From now on, we focus our attention on the proof of Theorem \ref{1.3}. To do this, we need some auxiliaries.

\begin{lem}\label{5.1}
Let $P \in \Spec \calR$ such that $P \nsupseteq \a$. If $\calG(\calM)_P \neq (0)$ $($resp. $\calR(\calM)_P \neq (0)$ and $P \supseteq u\a )$, then $\calR(\calM)_P \neq (0)$ $($resp. $\calG(\calM)_P \neq (0) )$. 
When this is the case, the following assertions hold true.
\begin{enumerate}[$(1)$]
\item $\calR(\calM)_P$ is a Cohen-Macaulay $\calR_P$-module if and only if $\calG(\calM)_P$ is a Cohen-Macaulay $\calG_P$-module.
\item $\dim_{\calR_P}\calR(\calM)_P = \dim_{\calR_P}\calG(\calR)_P + 1$. 
\end{enumerate}
\end{lem}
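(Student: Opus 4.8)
The plan is to reduce everything to a single short exact sequence in which multiplication by $u$ appears as a nonzerodivisor, and then to invoke the standard behaviour of Cohen-Macaulayness and dimension under cutting by a nonzerodivisor in the maximal ideal.

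First I would record that $\calG = \calR'/u\calR'$ is in fact a graded quotient of $\calR$ itself: the composite $\calR \hookrightarrow \calR' \twoheadrightarrow \calG$ is surjective with kernel $u\calR' \cap \calR = u\a$, so $\calG \cong \calR/u\a$. Hence $\calG$ and $\calG(\calM)$ carry natural $\calR$-module structures, the localizations $\calG_P$, $\calG(\calM)_P$ at $P \in \Spec\calR$ make sense, and $\calG_P = \calR_P/(u\a)\calR_P$. Next I would write down the exact sequence of $\calR$-modules
$$
0 \to \calR'(\calM) \xrightarrow{u} \calR'(\calM) \to \calG(\calM) \to 0,
$$
which is exact because $u = t^{-1}$ is injective on $\calR'(\calM) \subseteq R[t,t^{-1}]\otimes_R M$ and $\calG(\calM) = \calR'(\calM)/u\calR'(\calM)$ by definition.

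The crucial localization step is to compare the Rees and extended Rees objects away from $V(\a)$. I would show that both $\calR'/\calR = \bigoplus_{n<0}Rt^n$ and $\calR'(\calM)/\calR(\calM) = \bigoplus_{n<0}t^n\otimes M$ are $\a$-power torsion, hence supported in $V(\a)$: a homogeneous generator $t^{-m}\otimes x$ is annihilated by $[\calR]_k$ for every $k \ge m$, since for $a \in F_k$ one has $ax \in F_kM_0 \subseteq M_k \subseteq M_{k-m}$ and thus $t^{k-m}\otimes ax \in \calR(\calM)$. Consequently, for $P \nsupseteq \a$ the inclusions localize to isomorphisms $\calR_P \xrightarrow{\sim} \calR'_P$ and $\calR(\calM)_P \xrightarrow{\sim} \calR'(\calM)_P$. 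Under these identifications $u$ descends to a genuine element $\bar u \in \calR_P$, and localizing the displayed sequence gives
$$
0 \to \calR(\calM)_P \xrightarrow{\bar u} \calR(\calM)_P \to \calG(\calM)_P \to 0,
$$
exhibiting $\bar u$ as a nonzerodivisor on $\calR(\calM)_P$ with $\calG(\calM)_P = \calR(\calM)_P/\bar u\,\calR(\calM)_P$.

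From here the stated implications and items (1), (2) drop out. The surjection gives at once that $\calG(\calM)_P \neq (0)$ forces $\calR(\calM)_P \neq (0)$; moreover if $\bar u$ were a unit of $\calR_P$ then $\calG(\calM)_P = (0)$, so $\calG(\calM)_P \neq (0)$ already guarantees $\bar u \in P\calR_P$. For the resp.\ statement, assuming $P \supseteq u\a$ I would choose $a \in \a \setminus P$ (possible since $P \nsupseteq \a$); then $a$ is a unit in $\calR_P$ while $ua \in u\a \subseteq P$, whence $\bar u \in P\calR_P$, and Nakayama's lemma promotes $\calR(\calM)_P \neq (0)$ to $\calG(\calM)_P \neq (0)$. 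In either situation $\bar u$ is a nonzerodivisor lying in the maximal ideal $P\calR_P$ of the Noetherian local ring $\calR_P$, so the standard identities $\depth_{\calR_P}\calG(\calM)_P = \depth_{\calR_P}\calR(\calM)_P - 1$ and $\dim_{\calR_P}\calG(\calM)_P = \dim_{\calR_P}\calR(\calM)_P - 1$ yield (2) immediately and give (1) once one remarks that the depth and dimension of the $\calG_P$-module $\calG(\calM)_P$ are unchanged when it is regarded over $\calR_P$. The main obstacle, and the step I would treat most carefully, is the torsion/support computation together with the attendant claim that $u$ becomes an honest element $\bar u \in \calR_P$; once the isomorphisms $\calR_P \cong \calR'_P$ and $\calR(\calM)_P \cong \calR'(\calM)_P$ for $P \nsupseteq \a$ are secured, the remainder is formal, with a secondary point of care being the precise determination of when $\bar u \in P\calR_P$, which is what distinguishes the two nonvanishing hypotheses.
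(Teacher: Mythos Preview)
Your proposal is correct and follows essentially the same route as the paper: both arguments establish the isomorphisms $\calR_P \cong \calR'_P$ and $\calR(\calM)_P \cong \calR'(\calM)_P$ for $P \nsupseteq \a$, and then reduce the statement to cutting the (extended) Rees module by the nonzerodivisor $u$ in the maximal ideal. The only cosmetic difference is that the paper fixes a concrete element $x = at^{n-1} \in \calR$ with $x\calR_P = u\calR'_P$ and verifies its regularity on $\calR(\calM)_P$ by inspecting associated primes, whereas you work directly with $u$ on the $\calR'$-side (where its regularity is transparent from the embedding into $R[t,t^{-1}]\otimes_R M$) and then transport through the localization isomorphism; this spares you the associated-primes claim but is otherwise the same argument.
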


\begin{proof}
Let $P \in \Spec \calR$ such that $P \nsupseteq \a$, but $P \supseteq u\a$. We choose a homogeneous element $\xi = at^n \in \a \setminus P$ where $n > 0$, $a \in F_n$.  
Then we get $x=u\xi  = at^{n-1} \in P$, since $P \supseteq u\a$.
\begin{claim}\label{claim}
If $Q \in \Ass_{\calR}\calR(\calM)$ such that $Q \subseteq P$, then $x \notin Q$. Therefore $x$ is a non-zerodivisor on $\calR(\calM)_P$.
\end{claim}

\begin{proof}[Proof of Claim \ref{claim}]
We assume that there exists $Q \in \Ass_{\calR}\calR(\calM)$ such that $Q \subseteq P$, but $x \in Q$. Write $Q = (0):_{\calR} \eta$ where $\eta = t^{\ell}\otimes m \ (\ell \in \Bbb Z,~m \in M_{\ell})$. Then we have $\xi = at^n \in (0):_{\calR} \eta = Q \subseteq P$, which implies a contradiction.
\end{proof}

Since $P \nsupseteq \a$, we get $\calR_P = \calR'_P$ and $\calR(\calM)_P = \calR'(\calM)_P$. Therefore
$$
(u\a) \calR_P = (u \a)\calR'_P = u \calR'_P = x\calR'_P \ \ \text{and} \ \ (u\a)\calR(\calM) \subseteq u[\calR(\calM)]_{+}.
$$
Hence $[u\calR(\calM)_{+}]_P = x \calR'(\calM)_P = x\calR(\calM)_P$, so that
$$
\calR(\calM)_P/{x\calR(\calM)_P} \cong \calG(\calM)_P
$$
as $\calR_P$-modules.
On the other hand, let $P \in \Spec \calR$ such that $\calG(\calM)_P \neq (0)$. Then $P \supseteq u \a$, since $u \a = u\calR' \cap \calR = \Ker (\calR \overset{i}{\to} \calR' \overset{\varepsilon}{\to} \calG)$. Therefore the assertions immediately come from the above isomorphism. 
\end{proof}

Here we need the following fact, which was originally given by G. Faltings. 

\begin{fact}[\cite{F}]\label{5.3} Let $I$ be an ideal of $R$ and $t \in \Bbb Z$. Consider the following two conditions. 
\begin{enumerate}[$(1)$]
\item There exists an integer $\ell > 0$ such that $I^{\ell}{\cdot}\H^i_{\m}(M) = (0)$ for each $i \neq t$.
\item $M_\p$ is a Cohen Macaulay $R_\p$-module and $t = \dim_{R_\p}M_\p + \dim R/\p$ for every $\p\in\Supp_RM$ but $\p \nsupseteq I$.
\end{enumerate}
Then the implication $(1) \Rightarrow (2)$ holds true. The converse holds, if $R$ is a homomorphic image of a Gorenstein local ring.
\end{fact}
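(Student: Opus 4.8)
The plan is to handle both implications through local duality, and to isolate a completion step as the precise reason why the converse needs the Gorenstein hypothesis while the forward implication does not. First I would dispose of the case in which $R = S/\a$ with $(S,\m_S)$ a Gorenstein local ring of dimension $n$ (so $\m = \m_S/\a$). By local duality over $S$ there are functorial isomorphisms $\H^i_\m(M)^{\vee} \cong \Ext^{n-i}_S(M,S)$ of finitely generated $R$-modules, where $(-)^{\vee} = \Hom_R(-,E_R(R/\m))$ is Matlis duality. Since $(-)^{\vee}$ is faithful and exact, an ideal annihilates $\H^i_\m(M)$ if and only if it annihilates $E^{n-i} := \Ext^{n-i}_S(M,S)$; and as $E^{n-i}$ is finitely generated, the existence of $\ell>0$ with $I^{\ell}E^{n-i} = (0)$ is equivalent to $\Supp_R E^{n-i} \subseteq V(I)$, i.e. to $E^{n-i}_\p = (0)$ for every $\p \nsupseteq I$. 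Thus, for a ring of this form, condition $(1)$ is equivalent to: $\Ext^{n-i}_{S_\p}(M_\p,S_\p) = (0)$ for every $\p \nsupseteq I$ and every $i \neq t$.

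The next step is to read off $(2)$ from this Ext-vanishing prime by prime. Fix $\p \in \Supp_R M$ with $\p \nsupseteq I$ and put $s = \dim R/\p = \dim S/\p$, so that $S_\p$ is Gorenstein local of dimension $n-s$. Over a Gorenstein local ring one has $\Ext^j_{S_\p}(M_\p,S_\p) = (0)$ for $j < (n-s)-\dim_{S_\p}M_\p$ and for $j > (n-s)-\depth_{S_\p}M_\p$, with non-vanishing at the two extremal degrees. Hence vanishing of $\Ext^j_{S_\p}(M_\p,S_\p)$ for all $j \neq n-t$ forces $(n-s)-\dim M_\p = n-t = (n-s)-\depth M_\p$, which says exactly that $M_\p$ is Cohen-Macaulay with $\dim_{R_\p}M_\p = t-s = t-\dim R/\p$. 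Running this equivalence over all relevant $\p$ establishes $(1)\Leftrightarrow(2)$ whenever $R$ is a homomorphic image of a Gorenstein local ring, and in particular yields the asserted converse direction.

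For the remaining implication $(1)\Rightarrow(2)$ over an arbitrary $R$, the plan is to descend from the completion. Because $\H^i_\m(M) = \H^i_{\m\widehat R}(\widehat M)$ and $I^{\ell}\H^i_\m(M) = (0) \Leftrightarrow (I\widehat R)^{\ell}\H^i_{\m\widehat R}(\widehat M) = (0)$, condition $(1)$ for $(R,M)$ coincides with condition $(1)$ for $(\widehat R,\widehat M)$. Since $\widehat R$ is a homomorphic image of a regular, hence Gorenstein, local ring by Cohen's structure theorem, the case already settled gives $(2)$ for $\widehat M$ over $\widehat R$. Now I would fix $\p \in \Supp_R M$ with $\p \nsupseteq I$ and choose $\P \in \Min_{\widehat R}(\p\widehat R)$ with $\dim \widehat R/\P = \dim R/\p$; then $\P \cap R = \p$, $\P \nsupseteq I\widehat R$, and $R_\p \to \widehat R_\P$ is a flat local homomorphism whose closed fibre $(\widehat R/\p\widehat R)_\P$ is Artinian, hence Cohen-Macaulay. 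Applying the standard behaviour of depth and dimension along such maps to $\widehat M_\P = \widehat R_\P \otimes_{R_\p} M_\p$ transfers Cohen-Macaulayness and the dimension equality from $\widehat M_\P$ down to $M_\p$, giving $(2)$ for $R$.

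The main obstacle is precisely the point that keeps this descent from also proving the converse in general: passing $(2)$ from $R$ \emph{up} to $\widehat R$ would require Cohen-Macaulayness to ascend along $R_\p \to \widehat R_\P$, which holds only when the formal fibres of $R$ are Cohen-Macaulay, and this is unavailable for an arbitrary Noetherian local ring. This is exactly what forces the hypothesis that $R$ be a homomorphic image of a Gorenstein local ring in the converse, where local duality is applied to $R$ itself rather than to $\widehat R$. A secondary care-point, purely bookkeeping, is the faithful-exactness step converting ``$I^{\ell}$ annihilates the Artinian module $\H^i_\m(M)$'' into ``$\Supp$ of its finitely generated dual lies in $V(I)$'', together with matching the unique surviving cohomological degree to the Cohen-Macaulay index $n-t$.
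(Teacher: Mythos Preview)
The paper does not supply a proof of this statement: it is recorded as a \emph{Fact} with a citation to Faltings \cite{F} and is then used as a black box inside the proof of Proposition~\ref{5.4}. There is therefore no ``paper's own proof'' to compare against.

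That said, your argument is a correct and standard reconstruction of why the fact holds. The reduction of $(1)$ to a support condition on the finitely generated modules $\Ext^{n-i}_S(M,S)$ via local duality over the ambient Gorenstein ring $S$ is exactly right, and your prime-by-prime analysis is sound: for $\p\in\Supp_R M$ with $\p\nsupseteq I$, the Gorenstein local ring $S_\p$ has dimension $n-s$ (here the Cohen--Macaulayness of $S$ is what gives $\height\p+\dim S/\p=n$), and the local-duality identification $\Ext^j_{S_\p}(M_\p,S_\p)^\vee\cong\H^{(n-s)-j}_{\p S_\p}(M_\p)$ produces the vanishing range and the nonvanishing at the two endpoints that you use. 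The completion step for the unconditional direction $(1)\Rightarrow(2)$ is also handled correctly: your choice of $\P\in\Min_{\widehat R}(\p\widehat R)$ with $\dim\widehat R/\P=\dim R/\p$ indeed satisfies $\P\cap R=\p$ (minimal primes of $\p\widehat R$ are associated primes of $\widehat R/\p\widehat R$, hence contract to $\p$ by flatness), the closed fibre of $R_\p\to\widehat R_\P$ is Artinian, and the depth/dimension formulas along a flat local map with Cohen--Macaulay fibre transfer both the Cohen--Macaulayness and the numerical identity $t=\dim M_\p+\dim R/\p$ back down to $R$. Your closing remark pinpointing why the converse cannot be descended from $\widehat R$ without control on the formal fibres is the correct explanation for the asymmetry in the hypotheses.
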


Let $\M$ be a unique graded maximal ideal of $\calR$.
Although a part of the proof of Proposition \ref{5.4} is due to the result \cite{TI}, we note the brief proof for the sake of completeness.

\begin{prop}\label{5.4}
Suppose that $\H^i_\M(\calG(\calM))$ is a finitely graded $\calR$-module for all $i \neq d$. Then $\H^i_\M(\calR(\calM))$ is a finitely graded $\calR$-module for all $i \neq d+1$. 
\end{prop}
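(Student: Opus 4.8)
The plan is to carry the whole argument over to the punctured spectrum of $\calR$ and to use Faltings' theorem (Fact \ref{5.3}) as a dictionary between ``vanishing up to finite length of local cohomology'' and ``Cohen--Macaulayness away from $\M$''. For a finitely generated graded $\calR$-module $N$ of dimension $D$, the statement that $\H^i_\M(N)$ is finitely graded for all $i\ne D$ is, after localizing at $\M$, exactly condition $(1)$ of Fact \ref{5.3} for the local ring $\calR_\M$, the ideal $I=\M$, and $t=D$, since a finitely graded (hence finite length) $\M$-torsion module is annihilated by a power of $\M$, and conversely. Its translation, condition $(2)$, reads: $N_\P$ is Cohen--Macaulay and $\dim N_\P+\dim \calR/\P=D$ for every $\P\in\Supp_\calR N$ with $\P\ne\M$.

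First I would apply the implication $(1)\Rightarrow(2)$, which holds unconditionally, to $N=\calG(\calM)$, where $D=\dim_\calR\calG(\calM)=d$ by Corollary \ref{2.5}. This turns the hypothesis into the assertion that $\calG(\calM)_\P$ is Cohen--Macaulay and $\dim\calG(\calM)_\P+\dim\calR/\P=d$ for all $\P\ne\M$ in $\Supp\calG(\calM)$. The heart of the proof is then to establish condition $(2)$ for $N=\calR(\calM)$, with $D=\dim_\calR\calR(\calM)=d+1$ (Proposition \ref{2.3} and its corollary). Fix $\P\in\Supp\calR(\calM)$ with $\P\ne\M$; I must produce Cohen--Macaulayness of $\calR(\calM)_\P$ together with $\dim\calR(\calM)_\P+\dim\calR/\P=d+1$. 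The clean case is $\P\nsupseteq\a$ with $\P\supseteq u\a$: there Lemma \ref{5.1} applies verbatim, giving $\calG(\calM)_\P\ne(0)$, the equivalence of Cohen--Macaulayness of $\calR(\calM)_\P$ and of $\calG(\calM)_\P$, and $\dim\calR(\calM)_\P=\dim\calG(\calM)_\P+1$; since $\dim\calR/\P$ is unchanged, the converted hypothesis yields $\dim\calR(\calM)_\P+\dim\calR/\P=(\dim\calG(\calM)_\P+\dim\calR/\P)+1=d+1$.

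Once condition $(2)$ is verified for $\calR(\calM)$, I would finish by the converse implication $(2)\Rightarrow(1)$ of Fact \ref{5.3} applied to $N=\calR(\calM)$ and $t=d+1$, obtaining $\M^\ell\cdot\H^i_\M(\calR(\calM))=(0)$ for all $i\ne d+1$, i.e. these modules are finitely graded. This is the direction that requires $\calR_\M$ to be a homomorphic image of a Gorenstein local ring; this holds whenever $R$ is such an image, because then $\calR$, being a finitely generated $R$-algebra, is a homomorphic image of a Gorenstein ring and one localizes at $\M$.

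The main obstacle is exactly the locus on which Lemma \ref{5.1} is silent, namely the primes $\P\ne\M$ with $\calG(\calM)_\P=(0)$. These are the $\P\nsupseteq\a$ with $\P\nsupseteq u\a$, where, as in the proof of Lemma \ref{5.1}, $u$ becomes a unit in $\calR_\P=\calR'_\P$ so that $\calR(\calM)_\P\cong(R[t,t^{-1}]\otimes_R M)_\P$, together with the primes $\P\supseteq\a$, $\P\ne\M$, which force $\p:=\P\cap R\subsetneq\m$. On this locus the Cohen--Macaulayness of $\calR(\calM)_\P$ reduces to that of $M_\p$ at a \emph{non-maximal} prime $\p$, which still has to be extracted from the hypothesis. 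I would handle this by localizing the filtration at $\p$ and observing that the Cohen--Macaulayness of $\calG(\calM)$ on the punctured spectrum forces $\calG$ of the localized filtration to be Cohen--Macaulay at the graded maximal ideal of $\calR(\calF_\p)$, whence $M_\p$ is Cohen--Macaulay; the remaining dimension equality follows from the usual bookkeeping, the extra $+1$ coming from the Laurent variable $t$. Keeping track of this reduction, and verifying that it is genuinely supplied by the standing hypothesis rather than requiring Cohen--Macaulayness of $M$ itself, is the delicate point of the argument.
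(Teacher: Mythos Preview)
Your reduction at the outset is incorrect: ``finitely graded (hence finite length)'' fails. The homogeneous pieces of $\H^i_\M(N)$ are Artinian $R$-modules, not in general finite-length ones; for instance, with $R=k[[x,y]]$, $\calR=R[t]$, and $N=R\oplus R/(x)$ placed in degree $0$, one has $\dim_\calR N=2$ while $\H^1_\M(N)=\H^1_\m(R/(x))$ sits entirely in degree $0$ yet has infinite length, so no power of $\M$ kills it. Thus ``finitely graded'' is \emph{not} equivalent to ``annihilated by a power of $\M$'', and Fact \ref{5.3} with $I=\M$ is the wrong dictionary. What is true---after passing to the completion and invoking graded local duality, exactly as the paper does---is that $\H^i_\M(N)$ is finitely graded if and only if some power of $\a=\calR_+$ annihilates it; hence the correct ideal in Fact \ref{5.3} is $I=\a$.

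Switching to $I=\a$ immediately removes the primes $\P\supseteq\a$ from consideration, so one of your two ``obstacle'' cases vanishes for free. What remains is to verify Faltings' condition $(2)$ at every $P$ with $P\nsupseteq\a$, not only at those with $P\supseteq u\a$ where Lemma \ref{5.1} applies. The hypothesis, likewise translated via $I=\a$, yields information only at primes of $\Supp_\calR\calG(\calM)$, all of which contain $u\a$; so for $P\nsupseteq u\a$ there is still no direct link. The paper bridges this gap by passing from $P$ to a prime $Q\in\Min_\calR\calR/(P^*+u\a)$: a separate argument (the paper's Claim) shows one can choose such $Q$ with $Q\nsupseteq\a$, so both Lemma \ref{5.1} and the hypothesis apply at $Q$, and then Cohen--Macaulayness and the dimension equality are pushed down from $Q$ through $P^*$ to $P$. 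Your proposed workaround for this locus---localizing the filtration at $\p=P\cap R$ and hoping to read off Cohen--Macaulayness of $M_\p$---does not actually extract anything from the stated hypothesis and, as you yourself flag, remains unresolved.
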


\begin{proof}
Passing to the completion and taking the local duality theorem, it is enough to show that there exists an integer $\ell > 0$ such that $\a^{\ell}{\cdot}\H^i_\M(\calR(M)) = (0)$ for every $i \neq d+1$. To see this, let $P \in \Supp_\calR \calR(M)$ such that $P \nsupseteq \a$ and $P \subseteq \M$. Put $L = u \a = u\calR' \cap \calR$.

\begin{claim}\label{claim2}
$\sqrt{P^* + L} \nsupseteq \a$.
\end{claim}

\begin{proof}[Proof of Claim \ref{claim2}]
Suppose that $P^* + L \supseteq \a^{\ell}$ for some $\ell > 0$. Since $\calR/{\a^{\ell}}$ is finitely graded, we can choose an integer $s>0$ such that $[\calR/{\a^{\ell}}]_n = (0)$ for all $n \ge s$. Then
$$
\calR_n = F_nt^n \subseteq [P^*]_n + F_{n+1}t^n
$$
for all $n \ge s$. On the other hand, for each $n \ge 0$, we set 
$$
I_n=\{a \in R \mid at^n \in P^*\}.
$$
Then $I_n$ is an ideal of $R$ and $I_n \subseteq F_n$ and $I_n \supseteq I_{n+1}$ for all $n \ge 0$. 
Hence $F_n \subseteq I_n + F_k$ for all $n\ge s$, $k \in \Bbb Z$.
Since $\calR$ is Noetherian, we get $\calR^{(d)} = R[F_dt^d]$ for some $d>0$, so that $F_{d\ell} = (F_d)^{\ell}$ for all $\ell > 0$.
We then have
$$
F_n \subseteq \bigcap_{\ell >0} [I_n + (F_d)^{\ell}] = I_n
$$
for all $n \ge s$, whence $\calR_n \subseteq P^*$. Thus
$$
\a^s \subseteq \sum_{n \ge s}\calR_n \subseteq P^* \subseteq P.
$$
which is impossible, because $\a \nsubseteq P$.
\end{proof}

 Therefore we can take $Q \in \Min_\calR \calR/[P^* + L]$ such that $\a \nsubseteq Q \subseteq \M$. Then $\calR(\calM)_Q \neq (0)$, because $\calR(\calM)_{P^*} \neq (0)$ and $P^* \subseteq Q$. Thanks to Lemma \ref{5.1}, $\calG(\calM)_Q \neq (0)$. Then  $\calG(\calM)_Q$ is Cohen-Macaulay and $\dim_{\calR_Q}\calG(\calM)_Q + \dim \calR_{\M}/{Q\calR_{\M}} = d$ by using Fact \ref{5.3}. Hence 
 $ \calR(\calM)_Q$ is Cohen-Macaulay and $\dim_{\calR_Q}\calR(\calM)_Q + \dim \calR_{\M}/{Q\calR_{\M}} = d +1$ by Lemma \ref{5.1}.

Since $P^* \subseteq Q$, $\calR(M)_{P^*}$ is Cohen-Macaulay, so is $\calR(M)_P$. We also have
\begin{align*}
d+1 &=  \dim_{\calR_Q}\calR(M)_Q + \dim \calR_{\M}/{Q\calR_{\M}}\\
    &= (\dim_{\calR_{P^*}}\calR(M)_{P^*} + \dim \calR_{Q}/{P^{*}\calR_{Q}} ) + (\dim \calR_{\M}/{P^{*}\calR_{\M}} - \dim \calR_{Q}/{P^{*}\calR_{Q}})\\
    &=  \dim_{\calR_{P^*}}\calR(M)_{P^*} + \dim \calR_{\M}/{P^{*}\calR_{\M}} \\
    &= \dim_{\calR_{P}}\calR(M)_{P} + \dim \calR_{\M}/{P\calR_{\M}}.
\end{align*}

Thanks to Fact \ref{5.3} again, there exists $\ell > 0$ such that
$$
\a^{\ell}{\cdot}\H^i_\M(\calR(\calM)) = (0) \ \ \text{for each} \ \ i \neq d+1 
$$
which shows $\H^i_\M(\calR(\calM))$ is finitely graded.
\end{proof}

We set 
$$
\rma(N) = \max \{n\in \Bbb Z \mid [\H^t_\M(N)]_n \neq (0) \}
$$
for a finitely generated graded $\calR$-module $N$ of dimension $t$, and call it {\it the a-invariant of $N$} (see \cite[DEFINITION (3.1.4)]{GW}). With this notation we have the following.

\begin{lem}\label{5.6}
The following assertions hold true.
\begin{enumerate}[$(1)$]
\item $[\H^{d+1}_{\M}(\calR(\calM))]_n = (0)$ for all $n\ge 0$.
\item If $[\H^{d+1}_{\M}(\calR(\calM))]_{-1} = (0)$, then $\H^{d+1}_{\M}(\calR(\calM))= (0)$.
\end{enumerate}
Consequently $\rma(\calR(\calM)) = -1$, if $\dim_{\calR}\calR(\calM) = d+1$.
\end{lem}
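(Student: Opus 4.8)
The plan is to pin down the graded module $W := \H^{d+1}_{\M}(\calR(\calM))$ degree by degree, by squeezing $\calR(\calM)$ between two short exact sequences of finitely generated graded $\calR$-modules. Write $\calR(\calM)_+ = \sum_{n>0}t^n\otimes M_n$ and $V := \H^{d+1}_{\M}(\calR(\calM)_+)$. First I would record the tautological sequence
$$
0 \to \calR(\calM)_+ \to \calR(\calM) \to M \to 0, \qquad (\diamondsuit)
$$
where $M = [\calR(\calM)]_0$ sits in degree $0$ and is viewed as an $\calR$-module through $\calR \twoheadrightarrow \calR/\a \cong R$. Next, using $\Ker\psi = u\calR(\calM)_+$ from Section 2 and the fact that multiplication by $u=t^{-1}$ is an injective $\calR$-linear endomorphism of $R[t,t^{-1}]\otimes_R M$, I would build
$$
0 \to \calR(\calM)_+(1) \overset{u}{\to} \calR(\calM) \overset{\psi}{\to} \calG(\calM) \to 0, \qquad (\heartsuit)
$$
the shift $(1)$ accounting for $\deg u = -1$, so that $u$ carries $[\calR(\calM)_+(1)]_n = t^{n+1}\otimes M_{n+1}$ degree-preservingly onto $t^n\otimes M_{n+1}\subseteq [\calR(\calM)]_n$.

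Applying $\H^{\bullet}_{\M}(-)$ and invoking $\dim_{\calR}\calG(\calM)=d$ (Corollary \ref{2.5}) and $\dim_{\calR}M=d$, together with Grothendieck vanishing above the dimension, I would extract two comparisons. From $(\diamondsuit)$: since $\H^{d+1}_{\M}(M)=0$ and the connecting map factors through $\H^{d}_{\M}(M)$, which is concentrated in degree $0$ because $M$ is, one gets $V_n \cong W_n$ for every $n\ne 0$. From $(\heartsuit)$: since $\H^{d+1}_{\M}(\calG(\calM))=0$, the map induced by $u$ is a surjection $V_{n+1}\twoheadrightarrow W_n$ for every $n$. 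Composing these gives surjections $W_{n+1}\twoheadrightarrow W_n$ for all $n\ne -1$. Because $W$ is the top local cohomology of a finitely generated graded module, $W_n=0$ for $n\gg 0$; descending induction along $W_{n+1}\twoheadrightarrow W_n$ for $n\ge 0$ then forces $W_n=0$ for all $n\ge 0$, which is assertion (1). For assertion (2), the hypothesis $W_{-1}=0$ propagates through the surjections $W_{n+1}\twoheadrightarrow W_n$ with $n\le -2$ to give $W_n=0$ in all negative degrees, whence $W=0$.

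For the concluding statement about the $a$-invariant, assertion (1) already yields $\rma(\calR(\calM))\le -1$. If $\dim_{\calR}\calR(\calM)=d+1$, then $W=\H^{d+1}_{\M}(\calR(\calM))\ne 0$ by Grothendieck's non-vanishing theorem for top local cohomology, so the contrapositive of (2) forces $W_{-1}\ne 0$; combined with the vanishing in nonnegative degrees this gives $\rma(\calR(\calM))=-1$. I expect the main obstacle to be the exactness bookkeeping: checking that $(\heartsuit)$ is genuinely short exact with the stated shift, that the connecting homomorphism out of $\H^{d}_{\M}(M)$ lives only in degree $0$, and that the two families $V_n\cong W_n$ ($n\ne 0$) and $V_{n+1}\twoheadrightarrow W_n$ assemble correctly into the single chain of surjections $W_{n+1}\twoheadrightarrow W_n$ for $n\ne -1$. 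Once that degree-wise comparison is secured, the two inductions and the non-vanishing step are routine.
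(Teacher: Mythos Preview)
Your proposal is correct and follows essentially the same argument as the paper: the paper uses the very same pair of short exact sequences $0\to L\to\calR(\calM)\to M\to 0$ and $0\to L(1)\to\calR(\calM)\to\calG(\calM)\to 0$ with $L=\calR(\calM)_+$, extracts the identical relations $V_n\cong W_n$ for $n\neq 0$ and $V_{n+1}\twoheadrightarrow W_n$, and concludes via the Artinian property in high degrees and the chain of surjections in negative degrees. Your write-up is in fact more explicit than the paper's about why $(\heartsuit)$ carries the shift and why the connecting map from $\H^d_{\M}(M)$ lives only in degree $0$, so the ``obstacle'' you anticipate is already handled.
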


\begin{proof}
We look at the following exact sequences
$$
0 \to L \to \mathcal R(\calM) \to M \to 0
$$
$$
0 \to L(1) \to \mathcal R(\calM) \to \mathcal G(\calM) \to 0
$$
of graded $\calR$-modules, where $L = \calR(\calM)_+$. By applying the local cohomology functors to the above sequences, we get
$$
\H^d_\m(M) \to \H^{d+1}_\M(L) \to \H^{d+1}_\M(\calR(\calM)) \to 0 
$$
and
$$
\H^d_\M(\calG(\calM)) \to \H^{d+1}_\M(L)(1) \to \H^{d+1}_\M(\calR(\calM)) \to 0.
$$
Thus
\begin{align*}
[\H^{d+1}_\M(L)]_n &\cong [\H^{d+1}_\M(\calR(\calM))]_n  \ \ \text{for} \ \ n \neq 0, \ \text{and} \\
[\H^{d+1}_\M(L)]_{n+1} &\to [\H^{d+1}_\M(\calR(\calM))]_n \to 0  \ \ \text{for} \ \ n \in \Bbb Z. 
\end{align*}
Therefore $[\H^{d+1}_\M(\calR(\calM))]_n =(0)$ for $n \ge 0$, because $\H^{d+1}_\M(\calR(\calM))$ is Artinian.  
Moreover we have 
$$
[H^{d+1}_\M(\calR(\calM))]_{-1} \to [\H^{d+1}_\M(\calR(\calM))]_n \to 0
$$
for $n < 0$, so we get the assertion $(2)$.
\end{proof}

We finally arrive at the following Theorem \ref{5.7} which is a module version of the results \cite[Part II, Theorem (1.1)]{GN}, \cite[Theorem 1.1]{V} (see also \cite[Theorem 1.1]{TI}, \cite[Theorem (1.1)]{GS}). 

\begin{thm}\label{5.7} The following conditions are equivalent.
\begin{enumerate}[{\rm (1)}]
\item $\calR(\calM)$ is a Cohen-Macaulay $\calR$-module and $\dim_{\calR}\calR(\calM) = d+1$.
\item $\H^i_\M(\calG(\calM)) = [\H^i_\M(\calG(\calM))]_{-1}$ for every $i < d$ and $\rma(\calG(\calM)) < 0$.
\end{enumerate}
When this is the case, $[\H^i_\M(\calG(\calM))]_{-1}\cong \H^i_{\m}(M)$ as $R$-modules for all $i < d$.
\end{thm}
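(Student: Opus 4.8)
The plan is to prove the equivalence by relating the local cohomology of $\calR(\calM)$ to that of $\calG(\calM)$ and $M$ through the two fundamental short exact sequences
\[
0 \to L \to \calR(\calM) \to M \to 0, \qquad 0 \to L(1) \to \calR(\calM) \to \calG(\calM) \to 0,
\]
where $L = \calR(\calM)_{+}$, exactly as set up in Lemma \ref{5.6}. For the direction $(1) \Rightarrow (2)$, I would assume $\calR(\calM)$ is Cohen--Macaulay of dimension $d+1$. Then $\H^i_\M(\calR(\calM)) = (0)$ for all $i \neq d+1$, and by Lemma \ref{5.6} we know $\rma(\calR(\calM)) = -1$, i.e. $\H^{d+1}_\M(\calR(\calM))$ is concentrated in degree $\leq -1$. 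Feeding this vanishing into the long exact sequences of local cohomology associated to the two short exact sequences above should let me read off the degree structure of $\H^i_\M(\calG(\calM))$ and $\H^i_\m(M)$.

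More concretely, from the first sequence, the connecting maps give isomorphisms $\H^i_\M(L) \cong \H^i_\m(M)$ for $i < d$ (since $\calR(\calM)$ has vanishing cohomology below $d+1$), together with the boundary terms at $i = d, d+1$. From the second sequence, $\H^i_\M(L)(1) \cong \H^i_\M(\calG(\calM))$ for $i < d$ in the same range. The grading shift $(1)$ is the crux: combining these two isomorphisms identifies $[\H^i_\M(\calG(\calM))]_n$ with $[\H^i_\m(M)]_{n+1}$ up to the multiplication-by-$u$ maps. Since $M$ is an ordinary $R$-module, $\H^i_\m(M)$ sits in internal degree $0$ only (the grading on $R[t,t^{-1}]\otimes_R M$ is induced purely by powers of $t$), which forces $\H^i_\M(\calG(\calM))$ to be concentrated in degree $-1$ for $i < d$, giving the first half of $(2)$, and $[\H^i_\M(\calG(\calM))]_{-1} \cong \H^i_\m(M)$, which is the concluding ``when this is the case'' statement. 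The condition $\rma(\calG(\calM)) < 0$, i.e. vanishing of $[\H^d_\M(\calG(\calM))]_n$ for $n \geq 0$, should follow from the top piece of the second long exact sequence together with the degree restriction $\rma(\calR(\calM)) = -1$ on $\H^{d+1}_\M(\calR(\calM))$ and the already-established degree structure at level $d$.

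For the converse $(2) \Rightarrow (1)$, I would first invoke Proposition \ref{5.4}: the hypotheses in $(2)$ make $\H^i_\M(\calG(\calM))$ finitely graded for $i \neq d$, hence $\H^i_\M(\calR(\calM))$ is finitely graded for $i \neq d+1$. To upgrade ``finitely graded'' to actual vanishing, I would run the long exact sequences backwards: the condition $\H^i_\M(\calG(\calM)) = [\H^i_\M(\calG(\calM))]_{-1}$ for $i < d$ says these modules are killed by $u$, and chasing the multiplication-by-$u$ maps on the graded local cohomology of $L$ (the map $L(1) \to \calR(\calM)$ is essentially multiplication by $u$) should force $\H^i_\M(\calR(\calM)) = (0)$ for $i < d+1$ by a Nakayama/graded-Artinian argument, since a finitely graded module annihilated by all the relevant degree shifts must vanish. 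Finally I would use $\rma(\calG(\calM)) < 0$ together with Lemma \ref{5.6}(2) to kill $\H^{d+1}_\M(\calR(\calM))$ below degree $-1$ and conclude it vanishes as well; combined with the dimension computation in Proposition \ref{2.3}(3) (using that some $\p \in \Assh_R M$ has $F_1 \nsubseteq \p$, guaranteed in the relevant setting), this yields that $\calR(\calM)$ is Cohen--Macaulay of dimension $d+1$.

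I expect the main obstacle to be the careful bookkeeping of the internal gradings across the shift $(1)$, particularly at the boundary degrees $i = d$ and $i = d+1$ where the connecting homomorphisms $\H^d_\m(M) \to \H^{d+1}_\M(L)$ and $\H^d_\M(\calG(\calM)) \to \H^{d+1}_\M(L)(1)$ need not be isomorphisms. Controlling the cokernels and the interaction with multiplication by $u$ in exactly these degrees — and making sure the $\rma(\calG(\calM)) < 0$ condition precisely accounts for them — is where the argument must be most delicate, and where the role of $M$ being concentrated in a single internal degree is essential.
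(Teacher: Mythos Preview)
Your overall strategy---running the two short exact sequences through local cohomology and invoking Proposition \ref{5.4} and Lemma \ref{5.6}---is exactly the paper's approach. Two corrections are needed.

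First, a bookkeeping slip in $(1)\Rightarrow(2)$: from $0 \to L \to \calR(\calM) \to M \to 0$ with $\H^i_\M(\calR(\calM))=(0)$ for $i\le d$, the connecting homomorphism gives $\H^i_\m(M) \cong \H^{i+1}_\M(L)$, not $\H^i_\M(L)\cong \H^i_\m(M)$; similarly $\H^i_\M(\calG(\calM)) \cong \H^{i+1}_\M(L)(1)$. With this index shift your argument for $(1)\Rightarrow(2)$ and for the final isomorphism goes through exactly as in the paper.

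Second, and more substantively, your ``Finally'' step in $(2)\Rightarrow(1)$ is misdirected. You propose to use $\rma(\calG(\calM))<0$ and Lemma \ref{5.6}(2) to make $\H^{d+1}_\M(\calR(\calM))$ vanish. But you do \emph{not} want this module to vanish: if $\calR(\calM)$ is Cohen--Macaulay of dimension $d+1$, it is precisely the unique nonvanishing local cohomology. What must be shown is $\H^i_\M(\calR(\calM))=(0)$ for all $i\le d$; once that is done, $\dim_{\calR}\calR(\calM)\le d+1$ (Proposition \ref{2.3}) forces equality and Cohen--Macaulayness, with no extra hypothesis on $F_1$ required. The actual role of the condition $\rma(\calG(\calM))<0$ lies in the vanishing of $\H^d_\M(\calR(\calM))$: in the degree-by-degree analysis of sequence $(**)$ at level $i=d-1$, one needs $[\H^d_\M(\calG(\calM))]_n=0$ for $n\ge 0$ in order to conclude $[\H^d_\M(L)]_{n+1}\cong[\H^d_\M(\calR(\calM))]_n$, and this is exactly what $\rma(\calG(\calM))<0$ supplies. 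For smaller $i$ the concentration hypothesis on $\H^{i+1}_\M(\calG(\calM))$ already does the job. So relocate the use of the $\rma$-condition from $\H^{d+1}$ to the boundary case $i=d-1$, and drop the attempt to kill $\H^{d+1}$ altogether.
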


\begin{proof}
Consider the following exact sequences
$$
(*)~~~~   \cdots \to \H^i_{\m}(L) \to \H^i_\M(\calR(\calM)) \to \H^i_\m(M) \to \H^{i+1}_\M(L) \to \H^{i+1}_\M(\calR(\calM)) \to \cdots
$$
$$
(**)~~~~\cdots \to \H^i_{\m}(L)(1) \to \H^i_\M(\calR(\calM)) \to \H^i_\M(\calG(\calM)) \to \H^{i+1}_\M(L)(1) \to \H^{i+1}_\M(\calR(\calM)) \to \cdots
$$
for each $i < d$. 

Firstly we assume that $\calR(\calM)$ is a Cohen-Macaulay $\calR$-module of dimension $d+1$. Then
$$
\H^i_\m(M) \cong \H^{i+1}_\M(L) \ \ \ \text{and} \ \ \ \H^i_\M(\calG(\calM)) \cong \H^{i+1}_\M(L)(1)
$$
for $i < d$. Therefore we get $\H^i_\M(\calG(\calM)) = [\H^i_\M(\calG(\calM))]_{-1}$ and $[\H^i_\M(\calG(\calM))]_{-1}\cong \H^i_{\m}(M)$ as $R$-modules. Since $\calR(\calM)$ is Cohen-Macaulay, we have
$$
0 \to \H^d_\m(M) \to \H^{d+1}_\M(L) \to \H^{d+1}_\M(\calR(\calM)) \to 0
$$
$$
0 \to \H^d_\M(\calG(\calM)) \to \H^{d+1}_\M(L)(1).
$$
Therefore $\rma(\calG(\calM)) < 0$ by using Lemma \ref{5.6}.

Conversely, let $i<d$. Thanks to the above sequences $(*)$, $(**)$ and our hypothesis, we get
\begin{align*}
[\H^{i+1}_\M(L)]_{n+1} &\cong [\H^{i+1}_\M(\calR(\calM))]_n \\
[\H^{i+1}_\M(L)]_{n+1} &\cong [\H^{i+1}_\M(\calR(\calM))]_{n+1}
\end{align*}
for each $n \ge 0$. Hence $[\H^i_\M(\calR(\calM))]_n = (0)$ for $n \ge 0$, since $\H^{i+1}_\M(\calR(\calM))$ is Artinian.
Moreover, we then have
$$
0 \to [\H^{i+1}_\M(\calR(\calM))]_n \to [\H^{i+1}_\M(\calR(\calM))]_{n-1}. 
$$
for $n < 0$ by above sequences $(*)$ and $(**)$. Thanks to Proposition \ref{5.4}, $\H^{i+1}_\M(\calR(\calM))$ is a finitely graded $\calR$-module for $i < d$.  Whence $[\H^{i+1}_\M(\calR(\calM))]_n = (0)$, which shows $\H^{i+1}_\M(\calR(\calM)) = (0)$ for all $i <d $. Hence $\calR(\calM)$ is a Cohen-Macaulay $\calR$-module of dimension $d+1$.
\end{proof}

\begin{cor}\label{5.8} \label{key} Suppose that $M$ is a Cohen-Macaulay $R$-module. Then the following conditions are equivalent.
\begin{enumerate}[{\rm (1)}]
\item $\calR(\calM)$ is a Cohen-Macaulay $\calR$-module and $\dim_{\calR}\calR(\calM) = d+1$.
\item $\calG(\calM)$ is a Cohen-Macaulay $\calG$-module and $\rma (\calG(\calM))< 0$.
\end{enumerate}
\end{cor}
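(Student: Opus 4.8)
The plan is to deduce this directly from Theorem \ref{5.7}, using the extra hypothesis that $M$ is Cohen-Macaulay to collapse the cohomological condition stated there. The point is that the corollary's condition $(1)$ is literally condition $(1)$ of Theorem \ref{5.7}, so it suffices to show that, under $\depth_R M = d$, condition $(2)$ of the present corollary is equivalent to condition $(2)$ of Theorem \ref{5.7}.

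First I would record two standard facts. Since $M$ is Cohen-Macaulay of dimension $d$, we have $\H^i_\m(M) = (0)$ for all $i < d$. By Corollary \ref{2.5}, $\dim_\calG \calG(\calM) = \dim_R M = d$, so $\calG(\calM)$ is a Cohen-Macaulay $\calG$-module precisely when $\H^i_\M(\calG(\calM)) = (0)$ for every $i < d$; here the higher local cohomology vanishes automatically by the dimension bound, and the local cohomology of the $\calG$-module $\calG(\calM)$ taken with respect to $\M$ coincides with that computed over $\calG$, since $\M$ maps onto the graded maximal ideal of $\calG$ and local cohomology depends only on the radical of the defining ideal.

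For the implication $(2) \Rightarrow (1)$, I would argue that if $\calG(\calM)$ is Cohen-Macaulay then $\H^i_\M(\calG(\calM)) = (0)$ for $i < d$; in particular the condition $\H^i_\M(\calG(\calM)) = [\H^i_\M(\calG(\calM))]_{-1}$ of Theorem \ref{5.7}$(2)$ holds trivially for every $i < d$ (both sides are zero). Together with the assumed inequality $\rma(\calG(\calM)) < 0$, this is exactly condition $(2)$ of Theorem \ref{5.7}, whence condition $(1)$ follows.

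For the converse $(1) \Rightarrow (2)$, I would invoke Theorem \ref{5.7} to obtain $\H^i_\M(\calG(\calM)) = [\H^i_\M(\calG(\calM))]_{-1}$ for $i < d$, the inequality $\rma(\calG(\calM)) < 0$, and the isomorphism $[\H^i_\M(\calG(\calM))]_{-1}\cong \H^i_\m(M)$ for $i < d$ furnished by the concluding assertion of that theorem. Since $\H^i_\m(M) = (0)$ for $i < d$ by the Cohen-Macaulayness of $M$, this forces $\H^i_\M(\calG(\calM)) = (0)$ for all $i < d$, i.e.\ $\calG(\calM)$ is Cohen-Macaulay; combined with $\rma(\calG(\calM)) < 0$ this yields condition $(2)$. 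I do not anticipate any serious obstacle: the whole substance is carried by Theorem \ref{5.7}, and the Cohen-Macaulay hypothesis on $M$ serves only to annihilate the modules $\H^i_\m(M)$, thereby upgrading the ``concentration in degree $-1$'' statement to genuine vanishing. The only steps demanding a line of care are the appeal to Corollary \ref{2.5} to pin down $\dim_\calG \calG(\calM) = d$ and the identification of the relevant local cohomology of $\calG(\calM)$ over $\calR$ and over $\calG$.
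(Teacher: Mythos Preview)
Your proposal is correct and is exactly the intended argument: the paper states Corollary \ref{5.8} immediately after Theorem \ref{5.7} without proof, so it is meant to follow directly from that theorem, and your reduction---using the Cohen--Macaulayness of $M$ to kill $\H^i_\m(M)$ for $i<d$ and thereby turn the ``concentrated in degree $-1$'' condition into vanishing---is precisely the content implicit in calling it a corollary. Your invocations of Corollary \ref{2.5} for $\dim_\calG\calG(\calM)=d$ and the identification of local cohomology over $\calR$ and $\calG$ are the only small points needing a word, and you have handled them.
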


%%%%%%%% seq CM %%%%%

We now reach the goal of this section.

\begin{proof}[Proof of Theorem \ref{1.3}]
Thanks to Lemma \ref{5.9}, $\calR(\calM)$ is a sequentially Cohen-Macaulay $\calR$-module if and only if $\calR(\calC_i)$ is Cohen-Macaulay for every $1 \le i \le \ell$. The latter condition is equivalent to saying that $\calG(\calC_i)$ is a Cohen-Macaulay $\calG$-module and $\rma(\calG(\calC_i)) < 0$ for all  $1\le i \le \ell$ by Corollary \ref{5.8}. Hence we get the equivalence between $(1)$ and $(2)$.
\end{proof}

We close this section by stating the ring version of Theorem \ref{1.2} and Theorem \ref{1.3}. Let $(R, \m)$ be a Noetherian local ring, $\calF=\{F_n\}_{n \in \Bbb Z}$ a filtration of ideals of $R$ such that $F_1 \neq R$. We assume that $\calR = \calR(\calF)$ is a Noetherian ring. Let $\{D_i\}_{0 \le i \le \ell}$ be the dimension filtration of $R$. Then $\calD_i = \{F_n \cap D_i\}_{n \in \Bbb Z}$ (resp. $\calC_i = \{[F_n \cap D_i + D_{i-1}]/D_{i-1}\}_{n \in \Bbb Z}$) is a $\calF$-filtration of $D_i$ (resp. $C_i$) for all $1 \le i \le \ell$.

\begin{thm}
The following conditions are equivalent.
\begin{enumerate}[$(1)$]
\item $\calR'$ is a sequentially Cohen-Macaulay ring.
\item $\calG$ is a sequentially Cohen-Macaulay ring and $\{\calG(\calD_i)\}_{0 \le i \le \ell}$ is the dimension filtration of $\calG$. 
\end{enumerate}
When this is the case, $R$ is a sequentially Cohen-Macaulay ring.
\end{thm}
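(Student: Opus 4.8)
The plan is to derive this statement as the special case of Theorem \ref{1.2} obtained by taking $M = R$, regarded as a module over itself, together with the filtration $\calM = \calF$, i.e. $M_n = F_n$ for every $n \in \Bbb Z$. Since $F_0 = R = M$, this $\calM$ is indeed an $\calF$-filtration of $R$-submodules of $M$, and the standing hypotheses of Theorem \ref{1.2} are met: $\calR$ is assumed Noetherian, and under the canonical identification recorded below $\calR(\calM)$ is nothing but $\calR$, hence finitely generated over itself.

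First I would record the identifications coming from the canonical isomorphisms $R[t] \otimes_R R \cong R[t]$ and $R[t, t^{-1}] \otimes_R R \cong R[t, t^{-1}]$. Under these, $t^n \otimes F_n$ corresponds to $F_n t^n$, so that
$$
\calR(\calM) \cong \calR, \quad \calR'(\calM) \cong \calR', \quad \calG(\calM) \cong \calG
$$
as graded $\calR$- (resp. $\calR'$-, $\calG$-) modules. In particular $\calR'(\calM)$ is a sequentially Cohen-Macaulay $\calR'$-module precisely when $\calR'$ is a sequentially Cohen-Macaulay ring, and likewise $\calG(\calM)$ is sequentially Cohen-Macaulay over $\calG$ exactly when $\calG$ is a sequentially Cohen-Macaulay ring, by the definition of sequential Cohen-Macaulayness of a ring (Definition \ref{3.1}).

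Next I would check that the filtration data appearing in the two statements coincide. The dimension filtration $\{D_i\}_{0 \le i \le \ell}$ of the ring $R$ is by definition the dimension filtration of $R$ as a module over itself, i.e. the dimension filtration of $M = R$. With $M_n = F_n$ the submodule filtrations
$$
\calD_i = \{M_n \cap D_i\}_{n \in \Bbb Z} = \{F_n \cap D_i\}_{n \in \Bbb Z}, \quad
\calC_i = \{[(M_n \cap D_i) + D_{i-1}]/D_{i-1}\}_{n \in \Bbb Z}
$$
are exactly the filtrations $\calD_i$, $\calC_i$ of the present statement; consequently $\calG(\calD_i)$ agrees with the module built in Theorem \ref{1.2}, and the requirement that $\{\calG(\calD_i)\}_{0 \le i \le \ell}$ be the dimension filtration of $\calG(\calM) \cong \calG$ translates verbatim into condition $(2)$ here.

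With these identifications in hand, the equivalence of $(1)$ and $(2)$, as well as the closing assertion that $R$ is a sequentially Cohen-Macaulay ring, follow immediately from Theorem \ref{1.2} applied to $M = R$ and $\calM = \calF$; indeed, the concluding clause of that theorem states that $M = R$ is then sequentially Cohen-Macaulay, which is exactly the statement that $R$ is a sequentially Cohen-Macaulay ring. There is essentially no obstacle here beyond verifying that the displayed isomorphisms respect the grading and carry the relevant submodule filtrations to one another; this is routine and purely formal, so the entire argument reduces to the specialization just described.
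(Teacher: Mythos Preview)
Your proposal is correct and is precisely the intended derivation: the paper presents this theorem simply as the ring version of Theorem \ref{1.2}, obtained by specializing to $M = R$ with $\calM = \calF$, and offers no separate proof. Your verification that the identifications $\calR(\calM) \cong \calR$, $\calR'(\calM) \cong \calR'$, $\calG(\calM) \cong \calG$ and the filtrations $\calD_i$, $\calC_i$ match up is exactly what is needed to justify this specialization.
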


\begin{thm}
Suppose that $R$ is a sequentially Cohen-Macaulay ring and $F_1 \nsubseteq \p$ for every $\p \in \Ass R$. Then the following conditions are equivalent.
\begin{enumerate}[{\rm (1)}]
\item $\calR$ is a sequentially Cohen-Macaulay ring. 
\item $\calG$ is a sequentially Cohen-Macaulay ring, $\{\calG (\calD_i)\}_{0 \le i \le \ell}$ is the dimension filtration of $\calG$ and  $\rma(\calG (\calC_i))<0$ for all $1 \le i \le \ell$.
\end{enumerate}
When this is the case, $\calR'$ is a sequentially Cohen-Macaulay ring.
\end{thm}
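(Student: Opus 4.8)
The plan is to obtain this ring version as the special case $M = R$, $\calM = \calF$ of Theorem \ref{1.3}, so that no genuinely new argument is required beyond identifying the objects involved. First I would take $M = R$, which is a nonzero finitely generated $R$-module with $d = \dim R < \infty$ since $R$ is Noetherian local, and set $\calM = \calF = \{F_n\}_{n \in \Bbb Z}$, viewed as an $\calF$-filtration of $R$-submodules of $R$; this is legitimate because $M_0 = F_0 = R = M$ and $F_m M_n = F_m F_n \subseteq F_{m+n} = M_{m+n}$. Under this choice the constructions of Section 3 specialize, via the isomorphism $R[t] \otimes_R R \cong R[t]$ sending $t^n \otimes a \mapsto a t^n$, to $\calR(\calM) = \calR$, $\calR'(\calM) = \calR'$ and $\calG(\calM) = \calG$. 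In particular $\calR(\calM) = \calR$ is trivially a finitely generated $\calR$-module, and $\calR$ is Noetherian by hypothesis, so the standing assumptions of Section 3 are met.

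Next I would verify that the data in the two theorems match. The dimension filtration of the $R$-module $M = R$ is by definition the dimension filtration $\{D_i\}_{0 \le i \le \ell}$ of the ring $R$, and with $M_n = F_n$ the auxiliary filtrations become
$$
\calD_i = \{F_n \cap D_i\}_{n \in \Bbb Z}, \qquad \calC_i = \{[(F_n \cap D_i) + D_{i-1}]/D_{i-1}\}_{n \in \Bbb Z},
$$
exactly as in the statement. The assumption that $R$ is a sequentially Cohen-Macaulay ring means precisely that $M = R$ is a sequentially Cohen-Macaulay $R$-module, and the condition $F_1 \nsubseteq \p$ for every $\p \in \Ass R = \Ass_R M$ is the remaining hypothesis of Theorem \ref{1.3}.

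Finally I would translate the module-theoretic conclusions into ring-theoretic ones, using the definition that a Noetherian ring is sequentially Cohen-Macaulay exactly when it is a sequentially Cohen-Macaulay module over itself: the statement ``$\calR(\calM)$ is a sequentially Cohen-Macaulay $\calR$-module'' reads as ``$\calR$ is a sequentially Cohen-Macaulay ring'', likewise ``$\calG(\calM)$ is a sequentially Cohen-Macaulay $\calG$-module'' becomes ``$\calG$ is a sequentially Cohen-Macaulay ring'', and ``$\calR'(\calM)$ is a sequentially Cohen-Macaulay $\calR'$-module'' becomes ``$\calR'$ is a sequentially Cohen-Macaulay ring''. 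The conditions that $\{\calG(\calD_i)\}_{0 \le i \le \ell}$ be the dimension filtration of $\calG(\calM) = \calG$ and that $\rma(\calG(\calC_i)) < 0$ for all $1 \le i \le \ell$ carry over verbatim. With these identifications, the equivalence $(1) \Leftrightarrow (2)$ and the concluding implication are exactly the content of Theorem \ref{1.3}.

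Because the proof is a pure specialization, I do not expect any serious obstacle; the only points that need care are the verbatim identifications of $\calR(\calM)$, $\calR'(\calM)$, $\calG(\calM)$ and of the filtrations $\calD_i$, $\calC_i$ with their ring-theoretic counterparts, together with the routine observation that for a graded ring with unique graded maximal ideal the sequentially Cohen-Macaulay property as a ring agrees with that as a module over itself.
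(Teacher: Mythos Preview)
Your proposal is correct and matches the paper's approach exactly: the paper presents this theorem (together with the companion ring version of Theorem \ref{1.2}) at the end of Section 3 without a separate proof, after setting up the identifications $\calD_i = \{F_n \cap D_i\}_{n \in \Bbb Z}$ and $\calC_i = \{[F_n \cap D_i + D_{i-1}]/D_{i-1}\}_{n \in \Bbb Z}$, so it is understood as the specialization $M = R$, $\calM = \calF$ of Theorem \ref{1.3}. Your write-up simply makes this specialization explicit, and the identifications you record are precisely the ones the paper relies on.
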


%%%%%%%%%%%%%%%%%%%%%%%%%%%%%%%%%%%%%%%%%%%%%%%%%%%%%%%%%%%%%%%%%%%%%%%%%%%%%%%%%%%%%%%%%%%%%%%%%%%%%%%%%%%%%%%%%%%%%%%%%%%%%%%%%%%%%%%%%%%%%%%%%%%%%%%%%%%%%%%%%%%%%%%%%%%%%%%%%%%%%%%%%%%%%%%%%%%%%%%%%%%%%

\section{Sequentially Cohen-Macaulay property in $E^{\natural}$}

In this section let $R=\sum_{n \ge 0}R_n$ be a $\Bbb Z$-graded ring. We put $F_n= \sum_{k \ge n}R_k$ for all $n \in \Bbb Z$. Then $F_n$ is a graded ideal of $R$, $\calF = \{F_n\}_{n\in \Bbb Z}$ is a filtration of ideals of $R$ and $F_1 := R_+ \neq R$. Let $E$ be a graded $R$-module with $E_n = (0)$ for all $n<0$. Put $E_{(n)} = \sum_{k \ge n}E_k$ for all $n \in \Bbb Z$. Then $E_{(n)}$ is a graded $R$-submodule of $E$, $\calE=\{E_{(n)}\}_{n \in \Bbb Z}$ is an $\calF$-filtration of $R$-submodules of $E$ and $E_{(0)} = E$. 
Then we have $R = \calG(\calF)$ and $E = \calG(\calE)$. Set $R^{\natural} := \calR(\calF)$ and $E^{\natural} := \calR(\calE)$.

Suppose that $R$ is a Noetherian ring and $E \neq (0)$ is a finitely generated graded $R$- module with $d = \dim_R E < \infty$. Notice that $R^{\natural}$ is Noetherian and $E^{\natural}$, $\calR'(\calE)$ are finitely generated.

We note the following.

\begin{lem}\label{6.2}
The following assertions hold true.
\begin{enumerate}[$(1)$]
\item $\dim_{\calR'}\calR'(\calE) = d + 1$.
\item Suppose that there exists $\p \in \Assh_R E$ such that $F_1 \nsubseteq \p$. Then $\dim_{R^{\natural}}E^{\natural} = d + 1$.
\end{enumerate}
\end{lem}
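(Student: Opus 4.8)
The plan is to recognize that both assertions are immediate specializations of the general dimension formulas for the (extended) Rees module established in Section~2, applied to $M = E$ equipped with the $\calF$-filtration $\calM = \calE$. Before invoking those results I would check that the standing hypotheses of Section~2 are in force in the present graded setting: that $\calF = \{F_n\}$ is genuinely a filtration of ideals with $F_1 = R_+ \neq R$, that $\calE = \{E_{(n)}\}$ is an $\calF$-filtration of $R$-submodules of $E$, and that the Noetherian and finite-generation assumptions hold. These have already been recorded in the paragraph immediately preceding the lemma, where it is noted that $R^{\natural}$ is Noetherian and that $E^{\natural}$ and $\calR'(\calE)$ are finitely generated, so no further verification is required.

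For assertion~(1) I would simply apply Proposition~\ref{2.4}(3) with $M = E$ and $\calM = \calE$. Since $E \neq (0)$, that proposition yields
$$
\dim_{\calR'}\calR'(\calE) = \dim_R E + 1 = d+1,
$$
which is exactly the claim. For assertion~(2) I would apply Proposition~\ref{2.3}(3) with the same substitution. Its hypotheses are precisely $E \neq (0)$, $d = \dim_R E < \infty$, and the existence of some $\p \in \Assh_R E$ with $F_1 \nsubseteq \p$, all of which are assumed here; the conclusion $\dim_{\calR(\calF)}\calR(\calE) = d+1$ becomes, after unwinding the notation $R^{\natural} = \calR(\calF)$ and $E^{\natural} = \calR(\calE)$, the desired equality $\dim_{R^{\natural}}E^{\natural} = d+1$.

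There is no substantial obstacle: the entire content is carried by Propositions~\ref{2.3} and~\ref{2.4}, and the lemma merely transcribes those formulas into the graded notation of this section. The one point deserving care is the dichotomy in Proposition~\ref{2.3}(1), where $\dim \calR/P$ exceeds $\dim R/\p$ by one only when $F_1 \nsubseteq \p$. This is exactly why assertion~(2), unlike assertion~(1) for the extended Rees module, must assume that some $\p \in \Assh_R E$ avoids $F_1$: that hypothesis is what guarantees a top-dimensional associated prime contributing dimension $d+1$ rather than $d$, and I would make sure to flag it when citing the proposition.
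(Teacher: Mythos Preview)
Your proposal is correct and matches the paper's own proof, which simply cites Proposition~\ref{2.3} and Proposition~\ref{2.4}. You have additionally spelled out which part of each proposition is being invoked and verified the hypotheses, which is a welcome elaboration but not a different approach.
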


\begin{proof}
See Proposition \ref{2.3}, Proposition \ref{2.4}.
\end{proof}

%%%%%%%%%%%%

Let $D_0 \subsetneq D_1 \subsetneq \ldots \subsetneq D_{\ell} = E$ be the dimension filtration of $E$. We set $C_i = D_i/{D_{i-1}}$, $d_i = \dim_R D_i$ for every $1\le i\le \ell$. Then $D_i$ is a graded $R$-submodule of $E$ for all $0 \le i \le \ell$.
 Let $1 \le i \le \ell$. Then from the exact sequence
$$
0 \to [D_{i-1}]_{(n)} \to [D_i]_{(n)} \to [C_i]_{(n)} \to 0
$$
of graded $R$-modules for all $n \in \Bbb Z$, we get the exact sequences
$$
0 \to \calR(\calD_{i-1}) \to \calR(\calD_i) \to \calR(\calC_i) \to 0
$$
$$
0 \to \calR'(\calD_{i-1}) \to \calR'(\calD_i) \to \calR'(\calC_i) \to 0 \ \ \text{and}
$$
$$
0 \to \calG(\calD_{i-1}) \to \calG(\calD_i) \to \calG(\calC_i) \to 0
$$
of graded modules, where $\calD_i = \{[D_i]_{(n)} \}_{n\in \mathbb Z}, \ \ 
\calC_i = \{[C_i]_{(n)}\}_{n\in \mathbb Z}$. By the same technique as in the proof of Lemma \ref{5.9}, we obtain the dimension filtration of $\calR'(E)$ and $E^{\natural}$ as follows.

\begin{lem}\label{6.4}
$\{\calR'(\calD_i)\}_{0 \le i \le \ell}$ is the dimension filtration of $\calR'(\calE)$. If $F_1 \nsubseteq \p$ for every $\p \in \Ass_R E$, then $\{\calR(\calD_i)\}_{0 \le i \le \ell}$ is the dimension filtration of $E^{\natural}$. 
\end{lem}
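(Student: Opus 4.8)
The plan is to deduce both assertions from the uniqueness criterion for the dimension filtration, \cite[Theorem 2.3]{GHS}, in exactly the way Lemma \ref{5.9} was established. That criterion says that any strictly increasing filtration by submodules is \emph{the} dimension filtration provided its successive quotients each satisfy $\Ass = \Assh$. So the entire task reduces to checking, for the chains $\{\calR'(\calD_i)\}$ and $\{\calR(\calD_i)\}$, that (a) the dimensions strictly increase along the chain, and (b) each quotient $\calR'(\calC_i)$ (resp. $\calR(\calC_i)$) is unmixed of the predicted dimension. Note that the needed exact sequences $0 \to \calR'(\calD_{i-1}) \to \calR'(\calD_i) \to \calR'(\calC_i) \to 0$ (and its $\calR$-version) are already recorded just before the statement, so $\calR'(\calC_i)$ really is the extended Rees module of the graded module $C_i$ with its induced filtration $\calC_i$.

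First I would pin down the dimensions. Applying Proposition \ref{2.4}(3) (packaged for this graded setting as Lemma \ref{6.2}(1)) to the graded submodule $D_i$ with its induced filtration $\calD_i$ gives $\dim_{\calR'}\calR'(\calD_i) = d_i + 1$; since $d_0 < d_1 < \cdots < d_\ell = d$, the chain $\{\calR'(\calD_i)\}$ has strictly increasing dimension. The same computation applied to $C_i$ yields $\dim_{\calR'}\calR'(\calC_i) = d_i + 1$, which is the target value for step (b).

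The substantive point is (b). Let $P \in \Ass_{\calR'}\calR'(\calC_i)$. By Proposition \ref{2.4}(1), the contraction $\p := P \cap R$ lies in $\Ass_R C_i$, $P = \p R[t,t^{-1}] \cap \calR'$, and $\dim \calR'/P = \dim R/\p + 1$. The key input is the unmixedness built into the dimension filtration: by Schenzel's description (\cite[Corollary 2.3]{Sch}), every $\p \in \Ass_R C_i$ satisfies $\dim R/\p = d_i$, i.e. $\Ass_R C_i = \Assh_R C_i$. Hence $\dim \calR'/P = d_i + 1 = \dim_{\calR'}\calR'(\calC_i)$ for all such $P$, which is exactly the condition $\Ass_{\calR'}\calR'(\calC_i) = \Assh_{\calR'}\calR'(\calC_i)$. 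Feeding this, together with the dimension count from the previous paragraph, into \cite[Theorem 2.3]{GHS} identifies $\{\calR'(\calD_i)\}$ as the dimension filtration of $\calR'(\calE)$.

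For the second assertion I would run the identical argument with $\calR$ in place of $\calR'$, invoking Proposition \ref{2.3}(1) instead of Proposition \ref{2.4}(1). This is where I expect the only real obstacle: Proposition \ref{2.3}(1) gives $\dim \calR/P = \dim R/\p + 1$ only in the branch $F_1 \nsubseteq \p$, and otherwise the dimension can fail to jump, which would wreck both the dimension count and the $\Ass=\Assh$ equality. The hypothesis $F_1 \nsubseteq \p$ for every $\p \in \Ass_R E$ is precisely what keeps us in the good branch: since $\Ass_R C_i \subseteq \Ass_R E$ (again \cite[Corollary 2.3]{Sch}), every $\p = P \cap R$ arising avoids $F_1$, so the first branch of Proposition \ref{2.3}(1) applies uniformly. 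This simultaneously forces $\dim \calR/P = d_i+1$ and, via Lemma \ref{6.2}(2) applied to $C_i$ (whose $\Assh_R C_i$ contains such a $\p$), forces $\dim_{\calR}\calR(\calC_i) = d_i + 1$. With $\Ass = \Assh$ verified for each $\calR(\calC_i)$, a final application of \cite[Theorem 2.3]{GHS} shows $\{\calR(\calD_i)\}$ is the dimension filtration of $E^{\natural}$, completing the proof.
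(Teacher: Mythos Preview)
Your proposal is correct and follows exactly the approach the paper intends: the paper explicitly says Lemma \ref{6.4} is obtained ``by the same technique as in the proof of Lemma \ref{5.9}'', and your argument is a faithful, detailed unpacking of that proof (dimension count via Proposition \ref{2.4}/\ref{2.3}, unmixedness of $C_i$ from \cite[Corollary 2.3]{Sch}, and the uniqueness criterion \cite[Theorem 2.3]{GHS}). Your discussion of why the hypothesis $F_1 \nsubseteq \p$ for $\p \in \Ass_R E$ is needed in the second assertion is exactly the point, and is left implicit in the paper's ``Similarly we obtain the last assertion''.
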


Hence we get the following, which characterize the sequentially Cohen-Macaulayness of $\calR'(\calE)$.

\begin{prop}\label{6.5}
The following conditions are equivalent.
\begin{enumerate}[$(1)$]
\item $\calR'(\calE)$ is a sequentially Cohen-Macaulay $\calR'$-module.
\item $E$ is a sequentially Cohen-Macaulay $R$-module.
\end{enumerate}
\end{prop}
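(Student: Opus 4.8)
The plan is to reduce Proposition \ref{6.5} to Theorem \ref{1.2} by exploiting the fact that, for the particular filtrations $\calF = \{F_n\}$ and $\calE = \{E_{(n)}\}$ of this section, the associated graded objects coincide with the original graded ones. Concretely, I would first record the graded-piece computations
$$
[\calG(\calF)]_n = F_n/F_{n+1} = R_n, \quad [\calG(\calE)]_n = E_{(n)}/E_{(n+1)} = E_n,
$$
obtained by noting that multiplication by $u = t^{-1}$ realizes on homogeneous components the inclusions $F_{n+1} \hookrightarrow F_n$ and $E_{(n+1)} \hookrightarrow E_{(n)}$. These give the identifications $\calG = \calG(\calF) \cong R$ and $\calG(\calE) \cong E$ as graded $R$-modules, already recorded at the start of the section.

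Next I would check the compatibility of the two pieces of notation for $\calD_i$. Since each $D_i$ is a graded $R$-submodule of $E$, one has $E_{(n)} \cap D_i = [D_i]_{(n)}$ for every $n$, so the filtration $\calD_i = \{E_{(n)} \cap D_i\}$ appearing in the framework of Theorem \ref{1.2} agrees with the filtration $\calD_i = \{[D_i]_{(n)}\}$ used here. The same graded-piece computation as above then yields $\calG(\calD_i) \cong D_i$ for every $0 \le i \le \ell$.

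With these identifications in hand, I would apply Theorem \ref{1.2} to $M = E$ and $\calM = \calE$. Its condition $(1)$ is verbatim condition $(1)$ of Proposition \ref{6.5}. Its condition $(2)$ asserts that $\calG(\calE) = E$ is a sequentially Cohen-Macaulay $\calG = R$-module and that $\{\calG(\calD_i)\}_{0 \le i \le \ell} = \{D_i\}_{0 \le i \le \ell}$ is the dimension filtration of $\calG(\calE) = E$. But the latter clause holds automatically, because $\{D_i\}$ was chosen to be the dimension filtration of $E$. Hence condition $(2)$ of Theorem \ref{1.2} collapses to the statement that $E$ is a sequentially Cohen-Macaulay $R$-module, which is condition $(2)$ of Proposition \ref{6.5}, and the equivalence follows.

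The only genuine content, which I would flag as the crux rather than a real obstacle, is precisely this collapse: the auxiliary hypothesis in Theorem \ref{1.2} that $\{\calG(\calD_i)\}$ be the dimension filtration of $\calG(\calM)$ becomes vacuous here, since in this section $\calG(\calM) = \calG(\calE) = E = M$. Everything else is the routine verification that the associated graded construction returns the original graded objects. Alternatively, one could argue directly from Lemma \ref{6.4}: since $\{\calR'(\calD_i)\}_{0 \le i \le \ell}$ is the dimension filtration of $\calR'(\calE)$, the module $\calR'(\calE)$ is sequentially Cohen-Macaulay exactly when each quotient $\calR'(\calC_i)$ is Cohen-Macaulay, and one then relates the Cohen-Macaulayness of $\calR'(\calC_i)$ to that of $C_i$ as in the proof of the last assertion of Theorem \ref{1.2}.
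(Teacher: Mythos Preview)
Your reduction to Theorem \ref{1.2} is the natural idea, and the identifications $\calG(\calE)=E$ and $\calG(\calD_i)=D_i$ are correct; the collapse of condition (2) works exactly as you describe. There is, however, a genuine technical point you have overlooked: Theorem \ref{1.2} is stated under the standing hypothesis of Section 3 that $(R,\m)$ is \emph{local}, whereas Proposition \ref{6.5} does not assume $R_0$ is local (that hypothesis is imposed only from Lemma \ref{6.6} onward). The difficulty surfaces in the implication $(2)\Rightarrow(1)$: knowing that $u=t^{-1}$ is a non-zerodivisor on $\calR'(\calC_i)$ with Cohen--Macaulay quotient $\calG(\calC_i)=C_i$ yields Cohen--Macaulayness of $\calR'(\calC_i)_Q$ only at primes $Q$ \emph{containing} $u$, and in the non-local setting there may well be primes in the support that avoid $u$.

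The paper in fact follows your alternative route through Lemma \ref{6.4}, reducing to the Cohen--Macaulayness of each $\calR'(\calC_i)$, but then handles the remaining primes $Q$ with $u\notin Q$ by passing to the localization $\calR'(\calC_i)_u = R[t,t^{-1}]\otimes_R C_i$, which is Cohen--Macaulay because $C_i$ is. Your phrase ``as in the proof of the last assertion of Theorem \ref{1.2}'' supplies only the direction $\calR'(\calC_i)$ Cohen--Macaulay $\Rightarrow$ $C_i$ Cohen--Macaulay; for the converse one needs precisely this extra localization-at-$u$ step, which is the new ingredient in the paper's argument.
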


\begin{proof}
$(1) \Rightarrow (2)$~~Follows from the fact that $C_i = \calG(\calC_i)$ for each $1\le i\le \ell$. 

$(2) \Rightarrow (1)$~~We get $\calG(\calC_i)$ is Cohen-Macaulay for all $1\le i \le \ell$. Let $Q \in \Supp_{\calR'}\calR'(\calC_i)$. We may assume $u \notin Q$. Then $\calR'(\calC_i)_u = R[t, t^{-1}]\otimes_R C_i$ is Cohen-Macaulay since $C_i$ is Cohen-Macaulay. Hence $\calR'(\calC_i)_Q$ is a Cohen-Macaulay $\calR'_Q$-module.
\end{proof}

Now we study the question of when $E^{\natural}$ is sequentially Cohen-Macaulay. The key is the following.

\begin{lem}\label{6.6}
Suppose $R_0$ is a local ring, $E$ is a Cohen-Macaulay $R$-module and $F_1 \nsubseteq \p$ for some $\p \in \Assh_R E$. Then the following conditions are equivalent.
\begin{enumerate}[$(1)$]
\item $E^{\natural}$ is a Cohen-Macaulay $R^{\natural}$-module.
\item $\rma(E) < 0$.
\end{enumerate}
\end{lem}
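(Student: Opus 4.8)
The plan is to deduce the statement directly from Corollary \ref{5.8}, after translating the objects of this section into the language of Section 3. Recall from the setup of this section that $\calG(\calF) = R$ and $\calG(\calE) = E$, while $\calR(\calF) = R^{\natural}$ and $\calR(\calE) = E^{\natural}$. Thus, in the notation of Corollary \ref{5.8} applied to the filtration $\calF$ and the $\calF$-filtration $\calE$, the associated graded module $\calG(\calM)$ becomes $E$ and the Rees module $\calR(\calM)$ becomes $E^{\natural}$. The one place where the present hypotheses differ from the strictly local setting of Section 3 is that here $R$ is merely graded with $R_0$ local; but $R^{\natural}$ still carries a unique graded maximal ideal $\M$, and the graded local cohomology $\H^i_{\M}(-)$ and the a-invariant behave exactly as in Section 3, so the arguments of Theorem \ref{5.7} and Corollary \ref{5.8} transfer verbatim to the graded maximal ideal of $R^{\natural}$.

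First I would pin down the dimension. Because $F_1 \nsubseteq \p$ for some $\p \in \Assh_R E$, Lemma \ref{6.2}(2) gives $\dim_{R^{\natural}} E^{\natural} = d+1$ automatically; hence the condition \emph{``$\calR(\calM)$ is Cohen-Macaulay and $\dim_{\calR}\calR(\calM) = d+1$''} appearing in Corollary \ref{5.8}(1) collapses to the single requirement that $E^{\natural}$ be a Cohen-Macaulay $R^{\natural}$-module. Next I would invoke Corollary \ref{5.8} itself, whose standing hypothesis that the associated graded module be Cohen-Macaulay is supplied by our assumption that $E$ is a Cohen-Macaulay $R$-module. The corollary then reads: $E^{\natural}$ is Cohen-Macaulay of dimension $d+1$ if and only if $\calG(\calE) = E$ is Cohen-Macaulay and $\rma(\calG(\calE)) = \rma(E) < 0$. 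Since $E$ is Cohen-Macaulay by hypothesis, the right-hand condition reduces to $\rma(E) < 0$, and combining this with the dimension computation of the previous step we obtain precisely the desired equivalence $(1) \Leftrightarrow (2)$.

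The main thing to be careful about is the legitimacy of applying Corollary \ref{5.8} in the graded-local framework of this section rather than the local framework of Section 3. Once one is satisfied that the local cohomology and a-invariant machinery of Section 3 applies unchanged with respect to the unique graded maximal ideal of $R^{\natural}$, the two simplifications above (the dimension being forced by Lemma \ref{6.2}(2), and the Cohen-Macaulayness of $\calG(\calE) = E$ being part of the hypothesis) are the only substantive points, and the remaining steps are purely formal bookkeeping.
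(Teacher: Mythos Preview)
Your reduction to Corollary \ref{5.8} is the right idea, and both implications ultimately do rest on that result; however the sentence ``$R^{\natural}$ still carries a unique graded maximal ideal $\M$, and \ldots the arguments of Theorem \ref{5.7} and Corollary \ref{5.8} transfer verbatim'' is where the real work hides, and as stated it is not correct. With the natural $\Bbb Z$-grading by powers of $t$ one has $(R^{\natural})_0 = R$, and $R$ is only graded-local, not local (take $R = k[x]$); hence $R^{\natural}$ generally has many $\Bbb Z$-graded maximal ideals, and there is no distinguished $\M$ to which Section 3 applies verbatim.

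The paper does not attempt to rerun Section 3 in a graded-local framework. Instead it localizes at the graded maximal ideal $P = \m R + R_+$ of $R$: because $R_+$ annihilates each $E_{(n)}/E_{(n+1)}$ one has $\calG(\calE_P) \cong E$ and $\calG(\calF_P) \cong R$, and now $R_P$ is a genuine local ring, so Corollary \ref{5.8} applies over $R_P$ to give $\calR(\calE_P)$ Cohen-Macaulay of dimension $d+1$ $\Leftrightarrow$ $\rma(E) < 0$. For $(1)\Rightarrow(2)$ this already suffices, since Cohen-Macaulayness of $E^{\natural}$ passes to the localization $\calR(\calE_P)$. For $(2)\Rightarrow(1)$ there is an extra step: one must climb back from Cohen-Macaulayness of $\calR(\calE_P)$ to that of $E^{\natural}$ itself. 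Here the paper equips $\calR = R^{\natural}$ with a $\Bbb Z^2$-grading (setting $\calR_{(i,j)} = R_i t^j$), under which $\calR_{(0,0)} = R_0$ is local; the unique $\Bbb Z^2$-graded maximal ideal $L$ then satisfies $L\cap R = P$, whence $\calR(\calE)_L$ is a localization of the Cohen-Macaulay module $\calR(\calE_P)$ and is therefore Cohen-Macaulay, giving the conclusion. This bigrading trick is precisely the substitute for the missing uniqueness of a $\Bbb Z$-graded maximal ideal, and it is the substantive technical content your sketch waves past.
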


\begin{proof}
Let $P = \m R + R_+$, where $\m$ denotes the maximal ideal of $R_0$. Then $P \supseteq F_1$. Since $R_+(E_{(n)}/{E_{(n+1)}}) = (0)$, $R_+(F_n/{F_{n+1}}) = (0)$ for all $n \in \Bbb Z$, we have 
$$
E = \calG(\calE) \cong \calG(\calE_P), \ \ R = \calG(\calF) \cong \calG(\calF_P).
$$
Suppose that $E^{\natural}$ is a Cohen-Macaulay $R^{\natural}$-module. Then $\calR(\calE_P)$ is Cohen-Macaulay and $\dim_{\calR(R_P)}\calR(\calE_P)= d + 1$, whence $\calG(\calE_P)$ is Cohen-Macaulay and $\rma(\calG(\calE_P))<0$. Therefore we get $\rma(E) < 0$. 

On the other hand, suppose that $\rma(E) < 0$. Then $\calR(\calE_P)$ is a Cohen-Macaulay $\calR(R_P)$-module of dimension $ d+1$. Thus $\calR(\calE)_P$ is Cohen-Macaulay. Now we regard $\calR$ as a ${\Bbb Z}^2$-graded ring with the ${\Bbb Z}^2$-grading as follows:
$$
\calR_{(i,j)} = 
\left\{
\begin{array}{cc}
R_i t^j &    i \ge j \ge 0\\
(0) & otherwise.
\end{array}
\right.
$$
Moreover we set
$$
\calR(\calE)_{(i,j)} = 
\left\{
\begin{array}{cc}
t^j \otimes E_i &    i \ge j \ge 0\\
(0) & otherwise,
\end{array}
\right.
$$
where $t^j \otimes  E_i =\{t^j \otimes x \mid x \in E_i \}$. Then $\calR(\calE)$ is a ${\Bbb Z}^2$-graded $\calR$-module with the above grading $\calR(\calE)_{(i,j)}$. Notice that $\calR_{(0,0)} = R_0$ is a local ring, so that $\calR$ is $H$-local. Let $L$ be the $H$-maximal ideal of the ${\Bbb Z}^2$-graded ring $\calR$. Then we get $P \subseteq L$, whence $L \cap R = P$. Therefore $\calR(\calE)_L$ is a Cohen-Macaulay $\calR_L$-module, so that $E^{\natural}$ is Cohen-Macaulay.
\end{proof}

Our answer is the following.

\begin{thm}\label{6.7}
Suppose that $R_0$ is a local ring, $E$ is a sequentially Cohen-Macaulay $R$-module and $F_1 \nsubseteq \p$ for every $\p \in \Ass_R E$. Then the following conditions are equivalent.
\begin{enumerate}[$(1)$]
\item $E^{\natural}$ is a sequentially Cohen-Macaulay $R^{\natural}$-module.
\item $\rma(C_i) < 0$ for all $1 \le i \le \ell$. 
\end{enumerate}
\end{thm}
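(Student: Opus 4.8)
The plan is to reduce the statement to Lemma~\ref{6.6} applied to each successive quotient $C_i$ of the dimension filtration of $E$, after identifying the dimension filtration of $E^{\natural}$ by means of Lemma~\ref{6.4}.

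First I would unravel condition $(1)$. Because $F_1 \nsubseteq \p$ for every $\p \in \Ass_R E$, Lemma~\ref{6.4} tells us that $\{\calR(\calD_i)\}_{0 \le i \le \ell}$ is the dimension filtration of $E^{\natural}$. By the definition of the sequentially Cohen-Macaulay property, $E^{\natural}$ is then sequentially Cohen-Macaulay if and only if each graded quotient $\calR(\calD_i)/\calR(\calD_{i-1})$ is a Cohen-Macaulay $R^{\natural}$-module for every $1 \le i \le \ell$. Invoking the exact sequence
$$
0 \to \calR(\calD_{i-1}) \to \calR(\calD_i) \to \calR(\calC_i) \to 0
$$
recorded earlier in this section, these quotients are precisely $\calR(\calC_i)$. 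Hence $(1)$ is equivalent to the assertion that $\calR(\calC_i)$ is a Cohen-Macaulay $R^{\natural}$-module for every $1 \le i \le \ell$.

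Next I would apply Lemma~\ref{6.6} to each $C_i$ equipped with the filtration $\calC_i = \{[C_i]_{(n)}\}_{n \in \Bbb Z}$. Here $R_0$ is local by hypothesis, and $C_i$ is a Cohen-Macaulay graded $R$-module since $E$ is sequentially Cohen-Macaulay; moreover $\calG(\calC_i) = C_i$ exactly as $\calG(\calE) = E$. To verify the remaining hypothesis of Lemma~\ref{6.6}, namely that $F_1 \nsubseteq \p$ for some $\p \in \Assh_R C_i$, I would use the standard property of the dimension filtration (see \cite{Sch, GHS}) that $\Ass_R C_i \subseteq \Ass_R E$, so that $\Assh_R C_i \subseteq \Ass_R E$ and the required non-containment is inherited from the hypothesis on $E$. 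Lemma~\ref{6.6} then yields, for each $i$, that $\calR(\calC_i)$ is a Cohen-Macaulay $R^{\natural}$-module if and only if $\rma(\calG(\calC_i)) = \rma(C_i) < 0$.

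Combining the two equivalences finishes the argument: $E^{\natural}$ is sequentially Cohen-Macaulay precisely when every $\calR(\calC_i)$ is Cohen-Macaulay, which in turn happens precisely when $\rma(C_i) < 0$ for all $1 \le i \le \ell$, i.e.\ condition $(2)$. The only steps demanding care are the bookkeeping in the first paragraph---that the successive quotients of the dimension filtration of $E^{\natural}$ are exactly the modules $\calR(\calC_i)$---and the transfer of the associated-prime hypothesis from $E$ to each $C_i$; the genuine Cohen-Macaulay and $\rma$-invariant content is entirely delivered by Lemma~\ref{6.6}, so no further homological work is needed.
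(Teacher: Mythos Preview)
Your proof is correct and is precisely the argument the paper sets up: it mirrors the proof of Theorem~\ref{1.3}, replacing Lemma~\ref{5.9} by Lemma~\ref{6.4} and Corollary~\ref{5.8} by Lemma~\ref{6.6}, and using $\calG(\calC_i)=C_i$ to rewrite $\rma(\calG(\calC_i))$ as $\rma(C_i)$. The bookkeeping you flag---that $\calR(\calD_i)/\calR(\calD_{i-1})\cong\calR(\calC_i)=C_i^{\natural}$ and that $\Ass_R C_i\subseteq\Ass_R E$---is exactly what is needed to invoke Lemma~\ref{6.6} for each $i$, and nothing further is required.
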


%%%%%%%%%%%%%%%%%%%%%%%%%%%%%%%%%%%%%%%%%%%%%%%%%%%%%%%%%%%%%%%%%%%%%%%%%%%%%%%%%%%%%%%%%%%%%%%%%%%%%%%%%%%%%%%%%%%%%%%%%%%%%%%%%%%%%%%%%%%%%%%%%%%%%%%%%%%%%%%%%%%%%%%%%%%%%%%%%%%%%%%%%%%%%%%%%%%%%%%%%%%%%

\section{Application --Stanley-Reisner algebras--}

In this section, let $V=\{ 1, 2, \ldots, n\}~(n>0)$ be a vertex set, $\Delta$ a simplicial complex on $V$ such that $\Delta \neq \emptyset$. We denote $\calF(\Delta)$ a set of facets of $\Delta$ and  $m=\sharp \calF(\Delta) ~(>0)$ its cardinality. Let $S=k[X_1, X_2, \ldots, X_n]$ a polynomial ring over a field $k$, $R =k[\Delta]= S/{I_{\Delta}}$ the {\it Stanley-Reisner ring of $\Delta$} of dimension $d$, where $I_{\Delta}=(X_{i_1}X_{i_2}\cdots X_{i_r}\mid \{ i_1<i_2<\cdots <i_r \}\notin \Delta)$ the {\it Stanley-Reisner ideal of $R$}. 
 
We consider the Stanley-Reisner ring $R=\sum_{n \ge 0}R_n$ as a $\Bbb Z$-graded ring and put 
$$
I_n= \sum_{k \ge n}R_k = \m^n \ \ \text{for all} \ \ n \in \Bbb Z
$$ 
where $\m = R_+ = \sum_{n>0}R_n$ is a graded maximal ideal of $R$. Then $\calI = \{I_n\}_{n\in \Bbb Z}$ is a $\m$-adic filtration of $R$ and $I_1 := R_+ \neq R$. 

If $\Delta$ is shellable, then $R$ is a sequentially Cohen-Macaulay ring, so by Proposition \ref{6.5} we get the following.

\begin{prop}
If $\Delta$ is shellable, then $\calR'(\m)$ is a sequentially Cohen-Macaulay ring.
\end{prop}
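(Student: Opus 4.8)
The plan is to deduce the statement immediately from Proposition \ref{6.5} by taking the module $E$ of Section 4 to be the ring $R = k[\Delta]$ itself. The first ingredient is the classical fact that shellability of $\Delta$ forces the Stanley-Reisner ring $R = k[\Delta]$ to be a sequentially Cohen-Macaulay ring; this is exactly the assertion highlighted in the introduction as the typical example of a sequentially Cohen-Macaulay ring, and it is available from Stanley's work (\cite{St}), so no fresh argument is required.

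Next I would carry out the specialization within the framework of Section 4. Since $R = k[\Delta] = \sum_{n \ge 0} R_n$ is non-negatively graded, we have $R_n = (0)$ for all $n < 0$, so $E := R$ is an admissible choice for the module considered there, and it satisfies the standing hypotheses: $R$ is a finitely generated $k$-algebra, hence Noetherian, $R \neq (0)$, and $\dim_R R = d < \infty$. With this choice one has $E_{(n)} = \sum_{k \ge n} R_k = I_n = \m^n$ for every $n \in \Bbb Z$, so the filtration $\calE$ coincides with the $\m$-adic filtration $\calI$, and therefore $\calR'(\calE) = \calR'(\calI) = \calR'(\m)$. Under this identification the statement that $\calR'(\calE)$ is a sequentially Cohen-Macaulay $\calR'$-module reads exactly as the statement that $\calR'(\m)$ is a sequentially Cohen-Macaulay ring.

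Finally I would invoke the implication $(2) \Rightarrow (1)$ of Proposition \ref{6.5}: since $E = R$ is a sequentially Cohen-Macaulay $R$-module by the first step, we conclude that $\calR'(\calE) = \calR'(\m)$ is a sequentially Cohen-Macaulay $\calR'$-module, that is, a sequentially Cohen-Macaulay ring. I do not expect a genuine obstacle here; the only care needed is the routine bookkeeping that confirms $E = R$ meets the hypotheses of Section 4 and that the module-level conclusion of Proposition \ref{6.5} translates faithfully into the ring-level statement. All the real weight of the proposition is carried by the classical shellable $\Rightarrow$ sequentially Cohen-Macaulay theorem together with Proposition \ref{6.5}.
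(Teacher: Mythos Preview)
Your proposal is correct and follows exactly the approach of the paper: the paper simply remarks, immediately before the proposition, that if $\Delta$ is shellable then $R=k[\Delta]$ is a sequentially Cohen-Macaulay ring, and then invokes Proposition~\ref{6.5}. Your write-up is just a careful unpacking of that one-line argument, including the routine identification $\calR'(\calE)=\calR'(\m)$ when $E=R$.
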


Notice that $\p \nsupseteq I_1$ for every $\p \in \Ass R$ if and only if $F \neq \emptyset$ for all $F \in \calF(\Delta)$, which is equivalent to saying that $\Delta \neq \{\emptyset\}$.

The goal of this section is the following. Here $| F_i |$ denotes the cardinality of $F_i$.

\begin{thm}\label{7.4}
Suppose that $\Delta$ is shellable with shelling order $F_1, F_2, \ldots, F_m \in \calF(\Delta)$ such that $\dim F_1 \ge \dim F_2 \ge \cdots \ge \dim F_m$ 
 and $\Delta \neq \{\emptyset\}$. Then the following conditions are equivalent. 
\begin{enumerate}[$(1)$]
\item $\calR(\m)$ is a sequentially Cohen-Macaulay ring.
\item $m=1$ or if $m \ge 2$, then $| F_i |> \sharp \calF(\Delta_1 \cap \Delta_2)$ for every $2 \le i\le m$, where $\Delta_1 =\left< F_1, F_2, \ldots, F_{i-1}\right>$, $\Delta_2 = \left< F_i\right>$.
\end{enumerate}
\end{thm}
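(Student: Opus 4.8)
The plan is to derive Theorem~\ref{7.4} from Theorem~\ref{6.7} by converting the cohomological criterion $\rma(C_i)<0$ into the stated combinatorial inequality. First I would verify the hypotheses of Theorem~\ref{6.7} with $E=R=k[\Delta]$ and the filtration $\calI=\{\m^n\}$: shellability of $\Delta$ makes $R$ a sequentially Cohen-Macaulay ring, and $\Delta\neq\{\emptyset\}$ gives $I_1=\m\nsubseteq\p$ for every $\p\in\Ass R$, as remarked just before the statement. Since here $E^{\natural}=\calR(\m)$, Theorem~\ref{6.7} yields that condition $(1)$ holds if and only if $\rma(C_i)<0$ for every $1\le i\le\ell$, where $C_i=D_i/D_{i-1}$ runs through the quotients of the dimension filtration of $k[\Delta]$. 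Everything that remains is the combinatorial evaluation of these $a$-invariants.

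Next I would make the dimension filtration explicit. Since $k[\Delta]$ is reduced, its minimal primes are the $P_F=(X_j:j\notin F)$, one per facet $F$, with $\dim k[\Delta]/P_F=|F|$, so $\calS(R)=\{d_1<\cdots<d_\ell\}$ is the set of facet cardinalities. By Schenzel's description of the dimension filtration (\cite[Proposition~2.2]{Sch}) one has $D_i=\bigcap_{|F|>d_i}P_F$, whence $k[\Delta]/D_i\cong k[\Gamma_i]$ for $\Gamma_i=\langle F:|F|>d_i\rangle$ and $C_i=\ker(k[\Gamma_{i-1}]\to k[\Gamma_i])$. Thus $C_i$ is the Cohen-Macaulay module (of dimension $d_i$, since $R$ is sequentially Cohen-Macaulay) whose monomial $k$-basis is indexed by the faces in $\Gamma_{i-1}\setminus\Gamma_i$, that is, the faces contained in a facet of cardinality exactly $d_i$ but in no larger facet.

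The computation of $\rma(C_i)$ then proceeds through the Björner–Wachs shelling partition $\Delta=\bigsqcup_j[R(F_j),F_j]$ associated to the restriction faces $R(F_j)=\{v\in F_j:F_j\setminus\{v\}\in\langle F_1,\dots,F_{j-1}\rangle\}$. Because the shelling is ordered by decreasing dimension, I would argue that every face of $\Gamma_{i-1}\setminus\Gamma_i$ is assigned to a facet of cardinality exactly $d_i$, so that this partition restricts to the faces of $C_i$. As each interval $[R(F_j),F_j]$ contributes $s^{|R(F_j)|}/(1-s)^{|F_j|}$ to the Hilbert series, this gives $H(C_i,s)=\big(\sum_{|F_j|=d_i}s^{|R(F_j)|}\big)/(1-s)^{d_i}$; the numerator is the $h$-polynomial of the Cohen-Macaulay module $C_i$, so $\rma(C_i)=\max\{|R(F_j)|:|F_j|=d_i\}-d_i$. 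Hence $\rma(C_i)<0$ for all $i$ is equivalent to $|R(F_j)|<|F_j|$ for every facet $F_j$. Finally, the defining property of a shelling identifies, for $j\ge2$, the facets of $\langle F_1,\dots,F_{j-1}\rangle\cap\langle F_j\rangle$ with $\{F_j\setminus\{v\}:v\in R(F_j)\}$, so that $\sharp\calF(\Delta_1\cap\Delta_2)=|R(F_j)|$ in the notation of $(2)$; since $R(F_1)=\emptyset\subsetneq F_1$ is automatic, the inequality $\rma(C_i)<0$ for all $i$ becomes precisely $|F_j|>\sharp\calF(\Delta_1\cap\Delta_2)$ for all $2\le j\le m$, which is $(2)$ (the case $m=1$ being a simplex, for which $\calR(\m)$ is even Cohen-Macaulay).

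The main obstacle I expect is the Hilbert-series step: although $C_i$ is already known to be Cohen-Macaulay because $R$ is sequentially Cohen-Macaulay, $C_i$ is a \emph{relative} Stanley–Reisner module rather than a genuine Stanley–Reisner ring, and the delicate point is to show that the shelling partition of $\Delta$ refines the dimension filtration, i.e. that each face of $\Gamma_{i-1}\setminus\Gamma_i$ really is assigned to a cardinality-$d_i$ facet. This is exactly where the hypothesis $\dim F_1\ge\dim F_2\ge\cdots\ge\dim F_m$ is used, and it is also what guarantees $R(F_j)\neq\emptyset$ for $j\ge2$, removing the potential off-by-one in the count $\sharp\calF(\Delta_1\cap\Delta_2)=|R(F_j)|$ in low-dimensional edge cases such as vertex facets.
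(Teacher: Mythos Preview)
Your proposal is correct and follows a genuinely different route from the paper. Both arguments begin identically, invoking Theorem~\ref{6.7} to reduce $(1)$ to the condition $\rma(C_i)<0$ for all $i$. From there, however, the paper proceeds by induction on the number $m$ of facets: it peels off the last facet $F_m$, sets $\Delta_1=\langle F_1,\dots,F_{m-1}\rangle$, $\Delta_2=\langle F_m\rangle$, and uses the Mayer--Vietoris sequence $0\to S/I_\Delta\to S/I_{\Delta_1}\oplus S/I_{\Delta_2}\to S/(I_{\Delta_1}+I_{\Delta_2})\to0$ (in the pure case) or the sequence $0\to I_{\Delta_1}/I_\Delta\to S/I_\Delta\to S/I_{\Delta_1}\to0$ together with an identification $I_{\Delta_1}/I_\Delta\cong k[\Delta_2](-t)$ with $t=\sharp\calF(\Delta_1\cap\Delta_2)$ (in the nonpure case), then compares $a$-invariants through the resulting local cohomology sequences and a case analysis on whether this shifted copy equals $D_1$.

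Your approach bypasses the induction and case analysis entirely: the Bj\"orner--Wachs partition $\Delta=\bigsqcup_j[R(F_j),F_j]$, combined with the decreasing-dimension hypothesis, restricts to a partition of the support of each $C_i$ and yields the closed Hilbert-series formula $H(C_i,s)=\bigl(\sum_{|F_j|=d_i}s^{|R(F_j)|}\bigr)/(1-s)^{d_i}$, from which $\rma(C_i)=\max_{|F_j|=d_i}|R(F_j)|-d_i$ by the standard identification of the $a$-invariant of a Cohen--Macaulay graded module with the degree of its Hilbert series. The shelling axiom then gives $\sharp\calF(\Delta_1\cap\Delta_2)=|R(F_j)|$ directly. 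This is cleaner and more conceptual: it exposes the restriction-face sizes as the combinatorial invariants governing $\rma(C_i)$, whereas the paper's inductive exact-sequence argument recovers the same numbers one facet at a time. The paper's method, on the other hand, stays closer to the homological machinery of Section~3 and does not need the $h$-vector/$a$-invariant identity for Cohen--Macaulay modules as an external input. The ``obstacle'' you flag is real but your sketch already contains its resolution: the decreasing-dimension order is exactly what forces each face of $\Gamma_{i-1}\setminus\Gamma_i$ to land in an interval $[R(F_j),F_j]$ with $|F_j|=d_i$.
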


\begin{proof}
Thanks to Theorem \ref{6.7}, $\calR$ is sequentially Cohen-Macaulay if and only if $\rma(C_i) < 0$ for all $1 \le i \le \ell$, where $\{D_i\}_{0 \le i \le \ell}$ is the dimension filtration of $R$, $C_i = D_i/D_{i-1}$ and $d_i = \dim_R D_i$ for all $1 \le i\le \ell$.
If $m=1$, then $R = k[\Delta] \cong k[X_i \mid i \in F_1]$, which is a polynomial ring, so that $\ell = 1$ and $\rma(R) = - | F_1 | < 0$. Hence $\calR$ is a Cohen-Macaulay ring by Lemma \ref{6.6}. 

Suppose that $m > 1$ and the assertion holds for $m-1$. 
We put $\Delta_1=\left< F_1, F_2, \ldots, F_{m-1}\right>$ and $\Delta_2=\left< F_m\right>$. If $\ell = 1$, then $\Delta$ is pure. Look at the following exact sequence
$$
0 \to S/{I_{\Delta}} \to S/{I_{\Delta_1}}\oplus S/{I_{\Delta_2}} \to S/{I_{\Delta_1} + I_{\Delta_2}} \to 0
$$
of graded $R$-modules. We then have 
$$
S/{I_{\Delta_1} + I_{\Delta_2}} \cong k[\Delta_2]/(\overline{\xi})
$$
for some monomials $\xi \in I_{\Delta_1}\setminus I_{\Delta_2}$ in $X_1, X_2, \ldots, X_n$ with $0< \deg \xi = \sharp \calF(\Delta_1 \cap \Delta_2)$. Therefore $\rma (S/{I_{\Delta_1} + I_{\Delta_2}}) = \sharp \calF(\Delta_1 \cap \Delta_2) - | F_m |$. We put $\m= R_+$. Then we have the exact sequence of local cohomology modules as follows
$$
0 \to \H^{d-1}_\m(S/{I_{\Delta_1} + I_{\Delta_2}} ) \to \H^d_\m (S/{I_{\Delta}}) \to \H^d_\m (S/{I_{\Delta_1}}) \oplus \H^d_\m (S/{I_{\Delta_2}}) \to 0.
$$
Thus $\rma(R) = \max\{\sharp \calF(\Delta_1 \cap \Delta_2) - | F_m |, \rma(k[\Delta_1]), \rma(k[\Delta_2])\}$. Hence $\calR$ is sequentially Cohen-Macaulay if and only if $\sharp \calF(\Delta_1 \cap \Delta_2) < | F_m |$ and $\rma(k[\Delta_1])<0$. By using the induction arguments, we get the equivalence between $(1)$ and $(2)$. 

Suppose now that $\ell>1$. Consider the following exact sequence
$$
0 \to I_{\Delta_1}/I_{\Delta} \to S/I_{\Delta} \to S/I_{\Delta_1} \to 0
$$
of graded $R$-modules. Then we have
$$
I_{\Delta_1}/I_{\Delta} \cong {I_{\Delta_1}+I_{\Delta_2}}/I_{\Delta_2} = I_{\Delta_1 \cap \Delta_2}/I_{\Delta_2} = (\overline{\xi})
$$
where $\xi \in I_{\Delta_1}\setminus I_{\Delta_2}$ is a homogeneous element with $0< \deg \xi = \sharp \calF(\Delta_1 \cap \Delta_2)=:t$. Therefore $I_{\Delta_1}/I_{\Delta} \cong S/I_{\Delta_2}(-t)$, so that 
$$
0 \to S/I_{\Delta_2}(-t) \overset{\sigma}{\longrightarrow} S/I_{\Delta} \overset{\varepsilon}{\longrightarrow} S/I_{\Delta_1} \to 0.
$$
We put $L=\Im \sigma$. Then $L \neq (0)$, $\dim_R L = d_1$ and $\rma(L) = t- | F_m |$. We notice here that $L \subseteq D_1$. Now we set ${D_i}' = \varepsilon(D_i)$ for every $1 \le i \le \ell$. Then ${D_1}' \subsetneq {D_2}' \subsetneq \ldots \subsetneq {D_{\ell}}' = k[\Delta_1]$ and ${C_i}' := {D_i}'/{{D_{i-1}}'} \cong C_i$ for all $2 \le i \le \ell$. Hence $\rma(C_i) = \rma({C_i}')$ for $2 \le i \le \ell$.

\vspace{1em}
{\bf \underline{Case 1}} \ \  $L \subsetneq D_1$\ \  (i.e., ${D_1}' \neq (0)$)
\vspace{1em}

In this case ${D_0}':= (0) \subsetneq {D_1}' \subsetneq {D_2}' \subsetneq \ldots \subsetneq {D_{\ell}}' = k[\Delta_1]$ is the dimension filtration of $k[\Delta_1]$. 
Look at the following exact sequence
$$
0 \to L \to D_1 \to D_1' \to 0
$$
of $R$-modules. Then $\rma(D_1) = \max\{\rma(L), \rma({D_1}')\}$.

\vspace{1em}
{\bf \underline{Case 2}} \ \ $L = D_1$  \ \  (i.e., ${D_1}' = (0)$)
\vspace{1em}

Similarly $(0) = {D_1}' \subsetneq {D_2}' \subsetneq \ldots \subsetneq {D_{\ell}}' = k[\Delta_1]$ is the dimension filtration of $k[\Delta_1]$.

Summing up, in any case $\calR$ is a sequentially Cohen-Macaulay ring if and only if $\rma(L) < 0$ and the assertion $(1)$ holds for the ring $k[\Delta_1]$. Hence we get the equivalence of conditions $(1)$ and $(2)$ by using the induction hypothesis.
\end{proof}

\begin{rem}\label{7.2}
If $\Delta$ is shellable, then we can take a shelling order $F_1, F_2, \ldots, F_m \in \calF(\Delta)$ such that $\dim F_1 \ge \dim F_2 \ge \cdots \ge \dim F_m$.
\end{rem}

Apply Theorem \ref{7.4}, we get the following.

\begin{cor}
Under the same notation in Theorem \ref{7.4}. Suppose that $| F_m | \ge 2$. If $\left< F_1, F_2, \ldots, F_{i-1}\right>\cap \left< F_i\right>$ is a simplex for every $2 \le i\le m$, then $\calR(\m)$ is a sequentially Cohen-Macaulay ring.
\end{cor}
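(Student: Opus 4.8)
The plan is to reduce everything to Theorem \ref{7.4} by checking that its condition $(2)$ holds under the present hypotheses. Since the shelling order and the assumption $\Delta \neq \{\emptyset\}$ are inherited from Theorem \ref{7.4}, all that remains is to verify the numerical inequality $|F_i| > \sharp\calF(\Delta_1 \cap \Delta_2)$ for every $2 \le i \le m$, where $\Delta_1 = \left< F_1, F_2, \ldots, F_{i-1}\right>$ and $\Delta_2 = \left< F_i\right>$.

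First I would dispose of the trivial case $m = 1$, in which condition $(2)$ of Theorem \ref{7.4} holds by definition, so $\calR(\m)$ is immediately sequentially Cohen-Macaulay. Next, assuming $m \ge 2$, I would fix an index $i$ with $2 \le i \le m$ and analyze the intersection $\Delta_1 \cap \Delta_2$. By hypothesis this intersection is a simplex, say $\Delta_1 \cap \Delta_2 = \left< G \right>$ for a single face $G$. The key (and only genuinely nontrivial) observation is the elementary fact that a simplex possesses a unique facet, namely its generating face, so that $\sharp\calF(\Delta_1 \cap \Delta_2) = 1$ for every such $i$.

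To finish, I would exploit the ordering of the shelling. Because the facets were listed so that $\dim F_1 \ge \dim F_2 \ge \cdots \ge \dim F_m$, and $\dim F = |F| - 1$ for each facet $F$, we get $|F_1| \ge |F_2| \ge \cdots \ge |F_m|$. Combining this with the hypothesis $|F_m| \ge 2$ yields $|F_i| \ge |F_m| \ge 2 > 1 = \sharp\calF(\Delta_1 \cap \Delta_2)$ for every $2 \le i \le m$. Hence condition $(2)$ of Theorem \ref{7.4} is satisfied, and that theorem gives the conclusion that $\calR(\m)$ is a sequentially Cohen-Macaulay ring. The argument is thus a direct specialization of Theorem \ref{7.4}; there is no serious obstacle, the only point deserving care being the translation of the geometric hypothesis ``$\Delta_1 \cap \Delta_2$ is a simplex'' into the numerical datum $\sharp\calF(\Delta_1 \cap \Delta_2) = 1$, after which the dimension ordering together with $|F_m| \ge 2$ makes the required inequality automatic.
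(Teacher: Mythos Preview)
Your proposal is correct and is precisely the argument the paper has in mind: the paper merely writes ``Apply Theorem \ref{7.4}'' without spelling out the verification of condition $(2)$, and your check that a simplex has a unique facet (so $\sharp\calF(\Delta_1\cap\Delta_2)=1$) together with $|F_i|\ge |F_m|\ge 2$ is exactly the intended computation.
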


Let us give some examples.

%%%%%    Example     %%%%%%%%%%

\begin{ex}
Let $\Delta = \left< F_1, F_2, F_3\right>$, where $F_1 = \{1,2,3\}$, $F_2=\{2,3, 4\}$ and  $F_3=\{4. 5\}$. Then $\Delta$ is shellable with shelling order $F_1, F_2, F_3 \in \calF(\Delta)$. Then 
$$
\left< F_1\right> \cap \left< F_2\right>, \ \ \left< F_1, F_2\right> \cap \left< F_3\right>
$$ are simplexes, so that $\calR(\m)$ is a sequentially Cohen-Macaulay ring.
$$
%WinTpicVersion4.30a
{\unitlength 0.1in%
\begin{picture}( 23.1000, 10.0000)( 17.9000,-38.1500)%
% POLYGON 2 0 3 0 Black White
% 5 2400 3400 3000 3000 3000 3800 3000 3800 2400 3400
% 
\special{pn 8}%
\special{pa 2400 3400}%
\special{pa 3000 3000}%
\special{pa 3000 3800}%
\special{pa 2400 3400}%
\special{pa 3000 3000}%
\special{fp}%
% LINE 2 0 3 0 Black White
% 34 2430 3420 2460 3360 2464 3442 2528 3315 2498 3465 2595 3270 2531 3488 2662 3225 2565 3510 2730 3180 2599 3532 2798 3135 2632 3555 2865 3090 2666 3578 2932 3045 2700 3600 3000 3000 2734 3622 3000 3090 2768 3645 3000 3180 2801 3668 3000 3270 2835 3690 3000 3360 2869 3712 3000 3450 2902 3735 3000 3540 2936 3758 3000 3630 2970 3780 3000 3720
% 
\special{pn 8}%
\special{pa 2430 3420}%
\special{pa 2460 3360}%
\special{fp}%
\special{pa 2464 3442}%
\special{pa 2528 3315}%
\special{fp}%
\special{pa 2498 3465}%
\special{pa 2595 3270}%
\special{fp}%
\special{pa 2531 3488}%
\special{pa 2662 3225}%
\special{fp}%
\special{pa 2565 3510}%
\special{pa 2730 3180}%
\special{fp}%
\special{pa 2599 3532}%
\special{pa 2798 3135}%
\special{fp}%
\special{pa 2632 3555}%
\special{pa 2865 3090}%
\special{fp}%
\special{pa 2666 3578}%
\special{pa 2932 3045}%
\special{fp}%
\special{pa 2700 3600}%
\special{pa 3000 3000}%
\special{fp}%
\special{pa 2734 3622}%
\special{pa 3000 3090}%
\special{fp}%
\special{pa 2768 3645}%
\special{pa 3000 3180}%
\special{fp}%
\special{pa 2801 3668}%
\special{pa 3000 3270}%
\special{fp}%
\special{pa 2835 3690}%
\special{pa 3000 3360}%
\special{fp}%
\special{pa 2869 3712}%
\special{pa 3000 3450}%
\special{fp}%
\special{pa 2902 3735}%
\special{pa 3000 3540}%
\special{fp}%
\special{pa 2936 3758}%
\special{pa 3000 3630}%
\special{fp}%
\special{pa 2970 3780}%
\special{pa 3000 3720}%
\special{fp}%
% POLYGON 2 0 3 0 Black White
% 4 3000 3800 3600 3400 3000 3000 3000 3800
% 
\special{pn 8}%
\special{pa 3000 3800}%
\special{pa 3600 3400}%
\special{pa 3000 3000}%
\special{pa 3000 3800}%
\special{pa 3600 3400}%
\special{fp}%
% LINE 3 0 3 0 Black White
% 72 3002 3799 3000 3795 3019 3788 3000 3750 3036 3776 3000 3705 3052 3765 3000 3660 3069 3754 3000 3615 3086 3742 3000 3570 3103 3731 3000 3525 3120 3720 3000 3480 3137 3709 3000 3435 3154 3698 3000 3390 3171 3686 3000 3345 3188 3675 3000 3300 3204 3664 3000 3255 3221 3652 3000 3210 3238 3641 3000 3165 3255 3630 3000 3120 3272 3619 3000 3075 3289 3608 3000 3030 3306 3596 3011 3008 3322 3585 3045 3030 3339 3574 3079 3052 3356 3562 3112 3075 3373 3551 3146 3098 3390 3540 3180 3120 3407 3529 3214 3142 3424 3518 3248 3165 3441 3506 3281 3188 3458 3495 3315 3210 3474 3484 3349 3232 3491 3472 3382 3255 3508 3461 3416 3278 3525 3450 3450 3300 3542 3439 3484 3322 3559 3428 3518 3345 3576 3416 3551 3368 3592 3405 3585 3390
% 
\special{pn 4}%
\special{pa 3002 3799}%
\special{pa 3000 3795}%
\special{fp}%
\special{pa 3019 3788}%
\special{pa 3000 3750}%
\special{fp}%
\special{pa 3036 3776}%
\special{pa 3000 3705}%
\special{fp}%
\special{pa 3052 3765}%
\special{pa 3000 3660}%
\special{fp}%
\special{pa 3069 3754}%
\special{pa 3000 3615}%
\special{fp}%
\special{pa 3086 3742}%
\special{pa 3000 3570}%
\special{fp}%
\special{pa 3103 3731}%
\special{pa 3000 3525}%
\special{fp}%
\special{pa 3120 3720}%
\special{pa 3000 3480}%
\special{fp}%
\special{pa 3137 3709}%
\special{pa 3000 3435}%
\special{fp}%
\special{pa 3154 3698}%
\special{pa 3000 3390}%
\special{fp}%
\special{pa 3171 3686}%
\special{pa 3000 3345}%
\special{fp}%
\special{pa 3188 3675}%
\special{pa 3000 3300}%
\special{fp}%
\special{pa 3204 3664}%
\special{pa 3000 3255}%
\special{fp}%
\special{pa 3221 3652}%
\special{pa 3000 3210}%
\special{fp}%
\special{pa 3238 3641}%
\special{pa 3000 3165}%
\special{fp}%
\special{pa 3255 3630}%
\special{pa 3000 3120}%
\special{fp}%
\special{pa 3272 3619}%
\special{pa 3000 3075}%
\special{fp}%
\special{pa 3289 3608}%
\special{pa 3000 3030}%
\special{fp}%
\special{pa 3306 3596}%
\special{pa 3011 3008}%
\special{fp}%
\special{pa 3322 3585}%
\special{pa 3045 3030}%
\special{fp}%
\special{pa 3339 3574}%
\special{pa 3079 3052}%
\special{fp}%
\special{pa 3356 3562}%
\special{pa 3112 3075}%
\special{fp}%
\special{pa 3373 3551}%
\special{pa 3146 3098}%
\special{fp}%
\special{pa 3390 3540}%
\special{pa 3180 3120}%
\special{fp}%
\special{pa 3407 3529}%
\special{pa 3214 3142}%
\special{fp}%
\special{pa 3424 3518}%
\special{pa 3248 3165}%
\special{fp}%
\special{pa 3441 3506}%
\special{pa 3281 3188}%
\special{fp}%
\special{pa 3458 3495}%
\special{pa 3315 3210}%
\special{fp}%
\special{pa 3474 3484}%
\special{pa 3349 3232}%
\special{fp}%
\special{pa 3491 3472}%
\special{pa 3382 3255}%
\special{fp}%
\special{pa 3508 3461}%
\special{pa 3416 3278}%
\special{fp}%
\special{pa 3525 3450}%
\special{pa 3450 3300}%
\special{fp}%
\special{pa 3542 3439}%
\special{pa 3484 3322}%
\special{fp}%
\special{pa 3559 3428}%
\special{pa 3518 3345}%
\special{fp}%
\special{pa 3576 3416}%
\special{pa 3551 3368}%
\special{fp}%
\special{pa 3592 3405}%
\special{pa 3585 3390}%
\special{fp}%
% POLYGON 2 0 3 0 Black White
% 3 3600 3400 4100 3400 3600 3400
% 
\special{pn 8}%
\special{pa 3600 3400}%
\special{pa 4100 3400}%
\special{fp}%
% STR 2 0 3 0 Black White
% 4 1790 3350 1790 3450 2 0 0 0
% $\Delta=$
\put(17.9000,-34.5000){\makebox(0,0)[lb]{$\Delta=$}}%
% STR 2 0 3 0 Black White
% 4 2315 3300 2315 3350 2 0 0 0
% 1
\put(23.1500,-33.5000){\makebox(0,0)[lb]{1}}%
% STR 2 0 3 0 Black White
% 4 3625 3315 3625 3365 2 0 0 0
% 4
\put(36.2500,-33.6500){\makebox(0,0)[lb]{4}}%
% STR 2 0 3 0 Black White
% 4 4090 3315 4090 3365 2 0 0 0
% 5
\put(40.9000,-33.6500){\makebox(0,0)[lb]{5}}%
% STR 2 0 3 0 Black White
% 4 2950 2910 2950 2960 2 0 0 0
% 2
\put(29.5000,-29.6000){\makebox(0,0)[lb]{2}}%
% STR 2 0 3 0 Black White
% 4 2950 3910 2950 3960 2 0 0 0
% 3
\put(29.5000,-39.6000){\makebox(0,0)[lb]{3}}%
\end{picture}}%

$$
\vspace{0.01cm}
\end{ex}

\begin{ex}
Let $\Delta = \left< F_1, F_2, F_3, F_4\right>$, where $F_1 = \{1,2,5\}$, $F_2=\{2,3\}$, $F_3=\{3,4\}$ and $F_4=\{4,5\}$. Notice that $\Delta$ is a shellable simplicial complex with shelling order $F_1, F_2, F_3, F_4 \in \calF(\Delta)$. We put $\Delta_1 = \left< F_1, F_2, F_3\right>$ and  $\Delta_2 = \left< F_4\right>$.  Then 
$$
\sharp \calF(\Delta_1 \cap \Delta_2) = 2 = | F_4 |,
$$
whence $\calR(\m)$ is not sequentially Cohen-Macaulay.
$$
%WinTpicVersion4.30a
{\unitlength 0.1in%
\begin{picture}( 19.5500, 11.5500)( 27.3500,-50.0000)%
% POLYGON 2 0 3 0 Black White
% 6 3400 5000 4600 5000 4600 4400 3400 4400 3400 4400 3400 5000
% 
\special{pn 8}%
\special{pa 3400 5000}%
\special{pa 4600 5000}%
\special{pa 4600 4400}%
\special{pa 3400 4400}%
\special{pa 3400 5000}%
\special{pa 4600 5000}%
\special{fp}%
% POLYGON 2 0 3 0 Black White
% 4 3400 4400 4000 4000 4600 4400 3400 4400
% 
\special{pn 8}%
\special{pa 3400 4400}%
\special{pa 4000 4000}%
\special{pa 4600 4400}%
\special{pa 3400 4400}%
\special{pa 4000 4000}%
\special{fp}%
% LINE 3 0 3 0 Black White
% 54 3425 4400 3438 4375 3470 4400 3505 4330 3515 4400 3572 4285 3560 4400 3640 4240 3605 4400 3708 4195 3650 4400 3775 4150 3695 4400 3842 4105 3740 4400 3910 4060 3785 4400 3978 4015 3830 4400 4022 4015 3875 4400 4056 4038 3920 4400 4090 4060 3965 4400 4124 4082 4010 4400 4158 4105 4055 4400 4191 4128 4100 4400 4225 4150 4145 4400 4259 4172 4190 4400 4292 4195 4235 4400 4326 4218 4280 4400 4360 4240 4325 4400 4394 4262 4370 4400 4428 4285 4415 4400 4461 4308 4460 4400 4495 4330 4505 4400 4529 4352 4550 4400 4562 4375 4596 4398 4595 4400
% 
\special{pn 4}%
\special{pa 3425 4400}%
\special{pa 3438 4375}%
\special{fp}%
\special{pa 3470 4400}%
\special{pa 3505 4330}%
\special{fp}%
\special{pa 3515 4400}%
\special{pa 3572 4285}%
\special{fp}%
\special{pa 3560 4400}%
\special{pa 3640 4240}%
\special{fp}%
\special{pa 3605 4400}%
\special{pa 3708 4195}%
\special{fp}%
\special{pa 3650 4400}%
\special{pa 3775 4150}%
\special{fp}%
\special{pa 3695 4400}%
\special{pa 3842 4105}%
\special{fp}%
\special{pa 3740 4400}%
\special{pa 3910 4060}%
\special{fp}%
\special{pa 3785 4400}%
\special{pa 3978 4015}%
\special{fp}%
\special{pa 3830 4400}%
\special{pa 4022 4015}%
\special{fp}%
\special{pa 3875 4400}%
\special{pa 4056 4038}%
\special{fp}%
\special{pa 3920 4400}%
\special{pa 4090 4060}%
\special{fp}%
\special{pa 3965 4400}%
\special{pa 4124 4082}%
\special{fp}%
\special{pa 4010 4400}%
\special{pa 4158 4105}%
\special{fp}%
\special{pa 4055 4400}%
\special{pa 4191 4128}%
\special{fp}%
\special{pa 4100 4400}%
\special{pa 4225 4150}%
\special{fp}%
\special{pa 4145 4400}%
\special{pa 4259 4172}%
\special{fp}%
\special{pa 4190 4400}%
\special{pa 4292 4195}%
\special{fp}%
\special{pa 4235 4400}%
\special{pa 4326 4218}%
\special{fp}%
\special{pa 4280 4400}%
\special{pa 4360 4240}%
\special{fp}%
\special{pa 4325 4400}%
\special{pa 4394 4262}%
\special{fp}%
\special{pa 4370 4400}%
\special{pa 4428 4285}%
\special{fp}%
\special{pa 4415 4400}%
\special{pa 4461 4308}%
\special{fp}%
\special{pa 4460 4400}%
\special{pa 4495 4330}%
\special{fp}%
\special{pa 4505 4400}%
\special{pa 4529 4352}%
\special{fp}%
\special{pa 4550 4400}%
\special{pa 4562 4375}%
\special{fp}%
\special{pa 4596 4398}%
\special{pa 4595 4400}%
\special{fp}%
% STR 2 0 3 0 Black White
% 4 4000 3810 4000 3910 5 0 0 0
% 1
\put(40.0000,-39.1000){\makebox(0,0){1}}%
% STR 2 0 3 0 Black White
% 4 3260 4300 3260 4400 5 0 0 0
% 2
\put(32.6000,-44.0000){\makebox(0,0){2}}%
% STR 2 0 3 0 Black White
% 4 3260 4900 3260 5000 5 0 0 0
% 3
\put(32.6000,-50.0000){\makebox(0,0){3}}%
% STR 2 0 3 0 Black White
% 4 4720 4900 4720 5000 5 0 0 0
% 4
\put(47.2000,-50.0000){\makebox(0,0){4}}%
% STR 2 0 3 0 Black White
% 4 4720 4300 4720 4400 5 0 0 0
% 5
\put(47.2000,-44.0000){\makebox(0,0){5}}%
% STR 2 0 3 0 Black White
% 4 2990 4550 2990 4650 5 0 0 0
% $\Delta=$
\put(29.9000,-46.5000){\makebox(0,0){$\Delta=$}}%
\end{picture}}%

$$
\end{ex}

%%%%%%%%%%%%%%%%%%%%%%%

%%%%%%%%%%%%%%%%%%%%%%%%%%%%%%%%%%%%%%%%%%%%%%%%%%%%%%%%%%%%%%%%%%%%%%%%%%%%%%%%%%%%%%%%%%%%%%%%%%%%%%%%%%%%%%%%%%%%%%%%%%%%%%%%%%%%%%%%%%%%%%%%%%%%%%%%%%%%%%%%%%%%%%%%%%%%%%%%%%%%%%%%%%%%%%%%%%%%%%%%%%%%%
\if0
\vspace{0.5cm}

\section{Appendix --Idealization of sequentially Cohen-Macaulay modules--}

Let $(R, \m)$ be a Noetherian local ring. We begin with the following.

\begin{lem}\label{lemma1}
Let $M$ and $N$ be finitely generated $R$-modules. Then $[M \oplus N]_n = M_n \oplus N_n$ for each $n \in \Bbb Z$.
\end{lem}

\begin{proof}
We have $[M \oplus N]_n \supseteq M_n \oplus N_n$, since $\dim_R(M_n\oplus N_n) = \max\{\dim_RM_n, \dim_RN_n\} \le n$. Let $p: M\oplus N \to M, (x,y) \mapsto x$ be the first projection. Then $p(L_n) \subseteq M_n$, since $\dim_Rp(L_n) \le \dim_RL_n \le n$. We similarly have $q(L_n) \subseteq N_n$, where $q: M\oplus N \to N, (x,y) \mapsto y$ denotes the second projection. Hence $[M\oplus N]_n \subseteq M_n \oplus N_n$ as claimed.
\end{proof}

The following two results are the key in this section.

\begin{prop}\label{prop1} Let $M$ and $N$ $(M, N \ne (0))$ be  finitely generated $R$-modules. Then $M\oplus N$ is a sequentially Cohen-Macaulay $R$-module if and only if both $M$ and $N$ are sequentially Cohen-Macaulay $R$-modules.
\end{prop}

\begin{proof} We set $L = M \oplus N$ and $\ell = \sharp \calS (L)$. Then $\calS (L) = \calS (M) \cup \calS (N)$, as $\Ass_RL = \Ass_RM \cup \Ass_RN$. Hence if $\ell = 1$, then $\calS (L) = \calS (M) = \calS (N)$ and $\dim_RL = \dim_RM = \dim_RN$. Therefore when $\ell = 1$, $L$ is a sequentially Cohen-Macaulay $R$-module if and only if $L$ is a Cohen-Macaulay $R$-module, and the second condition is equivalent to saying that the $R$-modules $M$ and $N$ are Cohen-Macaulay, that is $M$ and $N$ are sequentially Cohen-Macaulay $R$-modules. Suppose that $\ell > 1$ and that our assertion holds true for $\ell - 1$. Let 
$$
D_0=(0) \subsetneq D_1 \subsetneq  D_2 \subsetneq  \cdots \subsetneq D_{\ell} = L
$$
be the dimension filtration of $L = M \oplus N$, where $\calS (L) = \{d_1< d_2 < \cdots < d_\ell\}$. Then $\{D_i/D_1\}_{1 \le i \le \ell}$ is the dimension filtration of $L/D_1$ and hence $L$ is a sequentially Cohen-Macaulay $R$-module if and only if $D_1$ is a Cohen-Macaulay $R$-module and $L/D_1$ is a sequentially Cohen-Macaulay $R$-module. Because 
$$
D_1 = \left\{
\begin{array}{lc}
M_{d_1} \oplus (0) & (d_1 \in \calS (M) \setminus \calS (N)),\\
\vspace{0.5mm}\\
M_{d_1} \oplus N_{d_1} & (d_1 \in \calS (M) \cap \calS (N)),\\
\vspace{0.5mm}\\
(0) \oplus N_{d_1} & (d_1 \in \calS (N) \setminus \calS (M))
\end{array}
\right.
$$ by Lemma \ref{lemma1},  the hypothesis on $\ell$ readily shows  the second condition is equivalent to saying that the $R$-modules $M$ and $N$ are sequentially Cohen-Macaulay.
\end{proof}

Throughout this section let $A$ be a Noetherian local ring and assume that $A$ is a module-finite $R$-algebra.

\begin{thm} \label{thm2}
Let $M\neq (0)$ be a finitely generated $A$-module. Then the following assertions hold true.
\begin{enumerate}
\item[$(1)$] Let $n \in \Bbb Z$ and let $M_n$ denote the largest $R$-submodule of $M$ with $\dim_RM_n \le n$. Then $M_n$ is the largest $A$-submodule of $M$ with $\dim_AM_n \le n$. 
\item[$(2)$] The dimension filtration of $M$ as an $A$-module coincides with that of $M$ as an $R$-module.
\item[$(3)$] $M$ is a sequentially Cohen-Macaulay $A$-module if and only if $M$ is a sequentially Cohen-Macaulay $R$-module.
\end{enumerate}
\end{thm}

\begin{proof}
Let $n \in \Bbb Z$ and $X$ denote the largest $A$-submodule of $M$ with $\dim_AX \le n$. Then $X \subseteq M_n$, since $\dim_RX =\dim_AX \le n$. Let $Y = AM_n$. Then $\dim_AY \le n$. In fact, let $\p \in \Ass_RY$. Then since $[M_n]_\p \subseteq Y_\p = A_\p{\cdot} [M_n]_\p \subseteq M_\p$, we see $[M_n]_\p \ne (0)$, so that $\p \in \Supp_RM_n$. Hence $\dim R/\p \le \dim_RM_n \le n$. Thus $\dim_AY = \dim_RY \le n$, whence $M_n \subseteq Y \subseteq X$, which shows $X = M_n$. Therefore assertions (1) and  (2) follows. Since $\dim_AL = \dim_RL$ and $\depth_AL = \depth_RL$ for every finitely generated $A$-module $L$, we get assertion (3).
\end{proof}

We summarize consequences.

\begin{cor}\label{thm3} $A$ is a sequentially Cohen-Macaulay local ring if and only if $A$ is a sequentially Cohen-Macaulay $R$-module.
\end{cor}

\begin{cor}\label{cor1}
Let $M$ be a finitely generated $A$-module and assume that $R$ is a direct summand of $M$ as an $R$-module. If $M$ is a sequentially Cohen-Macaulay $A$-module, then $R$ is a sequentially Cohen-Macaulay local ring.
\end{cor}

\begin{proof}\label{cor2}
We write $M = R \oplus N$ where $N$ is an $R$-submodule of $M$. Since $M$ is a sequentially Cohen-Macaulay $A$-module,  by Theorem \ref{thm2} it is a sequentially Cohen-Macaulay $R$-module as well, so that  by Proposition \ref{prop1}, $R$ is a sequentially Cohen-Macaulay local ring.
\end{proof}

\begin{cor}\label{cor3}
Suppose that $R$ is a direct summand of $A$ as an $R$-module. If $A$ is a sequentially Cohen-Macaulay local ring, then $R$ is a sequentially Cohen-Macaulay local ring.
\end{cor}

We consider the invariant subring $R = A^G$. 

\begin{cor}\label{cor4}
Let $A$ be a Noetherian local ring, $G$ a finite subgroup of $\operatorname{Aut}A$. Assume that the order of $G$ is invertible in $A$.  If $A$ is a sequentially Cohen-Macaulay local ring, then the invariant subring $R=A^G$ of $A$ is a sequentially Cohen-Macaulay local ring.
\end{cor}

\begin{proof}
Since the order of $G$ is invertible in $A$, $A$ is a module-finite extension of $R= A^G$ such that $R$ is a direct summand of $A$ (see \cite{BR} and reduce to the case where $A$ is a reduced ring). Hence the assertion follows from Corollary \ref{cor3}.
\end{proof}

\begin{rem} In the setting of Corollary \ref{cor4} let $\{D_i\}_{0 \le i \le \ell}$ be the dimension filtration of $A$. Then each $D_i$ is a $G$-stable ideal of $A$  (compare with Theorem \ref{thm2} (1)) and the dimension filtration of $R$ is given by a refinement of $\{D_i^G\}_{0 \le i \le \ell}$. 
\end{rem}

The goal of this section is the following. 

\begin{cor}\label{cor5} 
Let $R$ be a Noetherian local ring, $M\neq (0)$ a finitely generated $R$-module.  We put $A = R\ltimes M$ the idealization of $M$ over $R$. Then the following conditions are equivalent.
\begin{enumerate}
\item[$(1)$] $A = R \ltimes M$ is a sequentially Cohen-Macaulay local ring.
\item[$(2)$] $A=R\ltimes M$ is a sequentially Cohen-Macaulay $R$-module. 
\item[$(3)$] $R$ is a sequentially Cohen-Macaulay local ring and $M$ is a sequentially Cohen-Macaulay $R$-module. 
\end{enumerate}
\end{cor}

\fi

%%%%%%%%%%%%%%%%%%%%%%%%%%%%%%%%%%%%%%%%%%%%%%%%%%%%%%%%%%%%%%%%%%%%%%%%%%%%%%%%%%%%%%%%%%%%%%%%%%%%%%%%%%%%%%%%%%%%%%%%%%%%%%%%%%%%%%%%%%%%%%%%%%%%%%%%%%%%%%%%%%%%%%%%%%%%%%%%%%%%%%%%%%%%%%%%%%%%%%%%%%%%%

\vspace{0.5cm}

\begin{ac}
The authors would like to thank Professor Shiro Goto for his valuable advice and comments.
\end{ac}

%%%%%%%%%%%%%%%%%%%%%%%%%%%%%%%%%%%%%%%%%%%%%%%%%%%%%%%%%%%%%
%\addcontentsline{toc}{section}{references}

%\begin{thebibliography}{99}

%\bibitem{BH} W. Bruns and J. Herzog, Cohen-Macaulay rings, Cambridge studies in advanced mathematics {\bf 39}, Cambridge University Press, 1993.

\end{document}